\documentclass[reqno]{amsart}

\usepackage{amsmath}
\usepackage{amssymb}
\usepackage{amsthm}
\usepackage{xspace}
\usepackage{pictexwd}
\usepackage{enumerate}
\usepackage{graphicx}
\usepackage{cancel}
\usepackage{pinlabel}

\theoremstyle{plain}

\newtheorem{prop}{Proposition}
\newtheorem{lem}{Lemma}

\theoremstyle{definition}

\theoremstyle{remark}
\newtheorem*{rem}{Remark}
\newtheorem*{example}{Example}

\def\forcehmode{\hskip0pt\relax}

\let\myskip=\medskip

\def\definebb#1=#2.{\def#1{{{\mathbb #2}^{\vphantom{x}}}}}
\definebb \c=C.
\definebb \q=Q.
\definebb \r=R.
\definebb \s=S.
\definebb \z=Z.

\def\chp{H_{\c}^2}

\def\calN{\mathcal{N}}

\def\ppp{$(p_1,p_2,p_3)$}

\def\al{{\alpha}}

\def\Ga{{\Gamma}}
\def\ga{{\gamma}}
\def\De{{\Delta}}

\def\La{{\Lambda}}

\def\Gaprimstar{\Ga'\backslash\{\id\}}
\def\Lastar{\La\backslash\{\id\}}
\def\fa{{f_A}}
\def\fb{{f_B}}
\def\wa{w_A}
\def\wal{{\wa^{(\ell)}}}
\def\wak{{\wa^{(k)}}}
\def\wb{{w_B}}

\let\emptyset=\varnothing
\def\dd{\partial}
\def\st{\,\,\big|\,\,}
\def\<{\langle}
\def\>{\rangle}

\let\ge=\geqslant
\let\le=\leqslant

\def\defit{\it}

\DeclareMathOperator{\dist}{dist}
\DeclareMathOperator{\id}{id}
\DeclareMathOperator{\PU}{PU}
\DeclareMathOperator{\SU}{SU}
\DeclareMathOperator{\trace}{trace}

\let\Im=\undefined \DeclareMathOperator{\Im}{Im}
\let\Re=\undefined \DeclareMathOperator{\Re}{Re}


\author[Andrew Monaghan]{Andrew Monaghan}
\address{Department of Mathematical Sciences, University of Liverpool, Peach Street, Liverpool L69~7ZL, UK}
\email{a.monaghan05@googlemail.com}
\author[John Parker]{John R.~Parker}
\address{Department of Mathematical Sciences, Durham University, Science Laboratories, South Road, Durham DH1~3LE, UK}
\email{j.r.parker@durham.ac.uk}
\author[Anna Pratoussevitch]{Anna Pratoussevitch}
\address{Department of Mathematical Sciences, University of Liverpool, Peach Street, Liverpool L69~7ZL, UK}
\email{annap@liverpool.ac.uk}

\title[Ultra-Parallel Complex Hyperbolic Triangle Groups]
      {Discreteness of Ultra-Parallel Complex Hyperbolic Triangle Groups of Type $[m_1,m_2,0]$}

\begin{date}  {\today} \end{date}

\thanks{A.M. acknowledges the financial support from an EPSRC DTA scholarship at the University of Liverpool.
J.R.P. acknowledges support from U.S. National Science Foundation
grants DMS 1107452, 1107263, 1107367 "RNMS: Geometric structures And
Representation varieties" (the GEAR Network).
A.P. acknowledges support from U.S. National Science Foundation conference grant for "Geometries, Surfaces and Representation of Fundamental Groups" at the University of Maryland.}

\subjclass[2010]{Primary 51M10; Secondary 32M15, 22E40, 53C55}







\keywords{complex hyperbolic geometry, triangle groups}

\begin{document}

\maketitle

\begin{abstract}
In this paper we consider ultra-parallel complex hyperbolic triangle groups of type $[m_1,m_2,0]$,
i.e.\ groups of isometries of the complex hyperbolic plane,
generated by complex reflections in three ultra-parallel complex geodesics
two of which intersect on the boundary.
We prove some discreteness and non-discreteness results for these groups
and discuss the connection between the discreteness results and ellipticity of certain group elements.
\end{abstract}

\section{Introduction}

\myskip
Complex hyperbolic triangle groups are groups of isometries of the complex hyperbolic plane~$\chp$,
generated by complex reflections in three complex geodesics.
For groups of complex hyperbolic isometries, the main obstacle to discreteness is the presence of elliptic elements of infinite order.
More precisely, a group of holomorphic isometries of~$\chp$ without stable proper totally geodesic subspaces and without elliptic elements of infinite order is discrete (see~\cite{CG, Gol, Will}).

\myskip
For a triple $p_1, p_2, p_3$, where each of the numbers~$p_k$ can be either a positive integer or equal to~$\infty$,
we say that a {\defit complex hyperbolic \ppp-triangle group representation\/} is a representation of the group
$$\<\ga_1,\ga_2,\ga_3\st\ga_k^2=(\ga_{k-1}\ga_{k+1})^{p_k}=1,~k=1,2,3\>$$
(where $\ga_{k+3}=\ga_k$, and the relation $(\ga_{k-1}\ga_{k+1})^{p_k}=1$ is to be omitted when $p_k=\infty$)
into the group $\PU(2,1)$ of holomorphic isometries of~$\chp$,
given by taking the generators $\ga_1,\ga_2,\ga_3$ to complex reflections $I_1,I_2,I_3$ of order~$2$ in complex geodesics $C_1,C_2,C_3$ in~$\chp$ such that $C_{k-1}$ and $C_{k+1}$ meet at the angle $\pi/p_k$ when $p_k$ is finite
resp.\ at the angle~$0$ when $p_k$ is equal to~$\infty$.
R.~Schwartz in his ICM talk in~2002~\cite{Sch02} conjectured that a complex hyperbolic \ppp-triangle group representation is discrete and faithful if and only if a group element~$w$ is non-elliptic,
where $w=\wa=I_1I_2I_1I_3$ or $w=\wb= I_1I_2I_3$ depending on~\ppp.

\myskip
In this paper we will consider instead the case of groups generated by complex reflections in complex geodesics that do not intersect inside~$\chp$.
In this case we will show that it is necessary to consider a larger set of elements, in fact infinitely many, 
$\wal=I_1(I_2I_1)^{\ell}I_3$ for~$\ell\in\z$ and~$\wb=I_1I_2I_3$,
and we will prove a generalisation of Schwartz' conjecture in a special case.

\myskip
For a triple $m_1, m_2, m_3$ of non-negative real numbers,
we say that a {\defit complex hyperbolic ultra-parallel $[m_1,m_2,m_3]$-triangle group\/} is a subgroup of $\PU(2,1)$ generated by complex reflections $I_1,I_2,I_3$ of order~$2$
in complex geodesics $C_1,C_2,C_3$ in~$\chp$ such that the distance between the closures of $C_{k-1}$ and $C_{k+1}$ in~$\chp$ is equal to~$m_k$.
A {\defit complex hyperbolic $[m_1,m_2,m_3]$-triangle group representation\/} is a representation of the group $\Ga=\<\ga_1,\ga_2,\ga_3\st\ga_k^2=1,~k=1,2,3\>=(\z/2\z)^{*3}$ into the group $\PU(2,1)$ given by taking the generators $\ga_k$ to the generators $I_k$ of an $[m_1,m_2,m_3]$-triangle group.
The deformation space of $[m_1,m_2,m_3]$-triangle groups for given distances~$m_1,m_2,m_3$ is of real dimension one,
such a group is determined up to an isometry by the angular invariant~$\al\in[0,2\pi]$,
see section~\ref{def-angular-invariant} for a definition.
Some special cases of ultra-parallel triangle groups have been considered previously, such as $[m,m,0]$-groups and $[m,m,2m]$-groups in~\cite{WG} and $[m,m,m]$-groups in~\cite{Vas}.

\myskip
Our results on complex hyperbolic $[m_1,m_2,0]$-triangle group representations are summarized in Figure~\ref{fig-regions1}.

\begin{figure}[htbp]
\labellist
\pinlabel {$\frac{2}{3}$} [t] at 178 147
\pinlabel {$\frac{2}{4}$} [t] at 145 147
\pinlabel {$\frac{2}{5}$} [t] at 130 147
\pinlabel {$\frac{2}{3}$} [t] at 178 147
\pinlabel {$(2,\frac{1}{2})$} [br] at 435 500
\pinlabel {$(1,\frac{1}{6})$} [br] at 237 265
\pinlabel {$(\frac{2}{3},\frac{1}{12})$} [br] at 185 215
\endlabellist
\begin{center}
\includegraphics[width=0.5\textwidth]{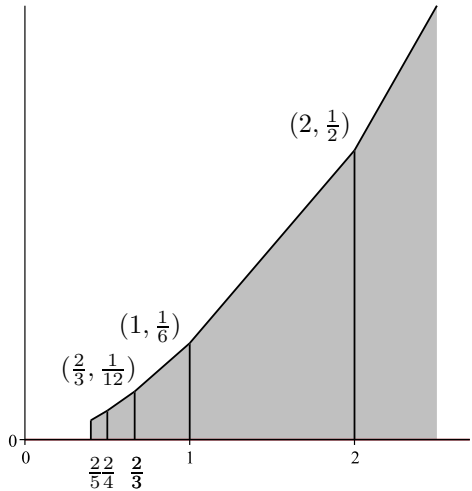}
\caption{Conditions of Propositions~\ref{conditions2-typeA} and~\ref{conditions2-typeB}}
\label{fig-regions1}
\end{center}
\end{figure}

\noindent
The upper right quadrant represents all pairs of distances $m_1$ and $m_2$,
where $(m_1,m_2)$ with $m_1\ge m_2>0$ corresponds to the point with the coordinates
$$(X,Y)=\left(\frac{\cosh^2(m_1/2)-1}{\cosh^2(m_2/2)-1}-1,\frac{1}{\cosh^2(m_2/2)-1}\right).$$
Consider the piecewise linear curve that consists of segments between the points $\left(\frac{2}{k},\frac{1}{k(k+1)}\right)$, $k\in\z$, $k\ge1$ and the ray starting at the point $\left(2,\frac{1}{2}\right)$ with the gradient~$\frac{1}{2}$.
Only the points and segments up to~$k=5$ are shown in the figure, but the broken line continues to the left.
Proposition~\ref{conditions2-typeA} states that the shaded region below the broken line
corresponds to pairs~$(m_1,m_2)$ such that $[m_1,m_2,0]$-representations are discrete and faithful 
if and only if the element~$\wa^{(k)}$ is non-elliptic,
where $k=1$ corresponds to the part of the shaded region with~$X\ge2$,
while each $k\ge2$ corresponds to the part  of the shaded region with $\frac{2}{k}\le X\le\frac{2}{k-1}$.
For the unshaded region above the broken line we expect that the elipticity of the element~$\wb$ plays a key role.
Proposition~\ref{conditions2-typeB} says that this region corresponds to pairs~$(m_1,m_2)$
such that $[m_1,m_2,0]$-representations are discrete and faithful 
if $\Re(\trace(\wb))\le-5$ which is a condition sufficient but not necessary for the element~$\wb$ to be non-elliptic.

\begin{prop}
\label{conditions2-typeA}
Suppose that $m_1\ge m_2>0$ and for some~$k\in\z$, $k\ge1$
$$\max\left\{\frac{1}{k}+\frac{k+1}{r_2^2-1},\  \frac{2}{k}\right\}\le\frac{r_1^2-1}{r_2^2-1}-1\le\frac{2}{k-1},$$
where $r_j=\cosh(m_j/2)$, $j=1,2$ and the second inequality is omitted for~$k=1$.
Then a complex hyperbolic $[m_1,m_2,0]$-triangle group representation is discrete and faithful if and only if the element~$\wa^{(k)}=I_1(I_2I_1)^{k}I_3$ is non-elliptic.
\end{prop}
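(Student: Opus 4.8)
The plan is to work in the Siegel domain model of $\chp$ with the common boundary point of $C_1$ and $C_2$ placed at infinity. First I would fix unit polar vectors $n_1,n_2,n_3$ for the three complex geodesics, using the phase freedom to normalise $\langle n_1,n_2\rangle=1$, $\langle n_2,n_3\rangle=r_1$ and $\langle n_3,n_1\rangle=r_2e^{i\al}$, so that the distances $m_1,m_2$ (through $r_1=\cosh(m_1/2)$, $r_2=\cosh(m_2/2)$) and the angular invariant $\al$ are built into the Gram matrix. Writing each reflection as $I_k(z)=-z+2\langle z,n_k\rangle n_k$, the matrices $I_1,I_2$ become upper triangular and $I_2I_1$ turns out to be unipotent, i.e.\ a non-trivial parabolic map fixing infinity. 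Hence $\langle I_1,I_2\rangle$ is an infinite dihedral group, discrete with limit set the single point at infinity, and isomorphic to $\z/2\z*\z/2\z$; this is the structural fact the whole argument rests on.

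Next I would compute $\trace(\wak)$ explicitly as a function of $r_1,r_2,\al$ and feed it into Goldman's discriminant $f(\tau)=|\tau|^4-8\Re(\tau^3)+18|\tau|^2-27$, whose sign distinguishes loxodromic ($f>0$), regular elliptic ($f<0$) and degenerate ($f=0$) elements; this converts ``$\wak$ non-elliptic'' into an explicit inequality. The necessity direction is then the short one: a discrete and faithful representation of $(\z/2\z)^{*3}$ can contain no elliptic element of infinite order, while faithfulness rules out any finite-order relation; since $\gamma_1(\gamma_2\gamma_1)^k\gamma_3$ is cyclically reduced of length $2k+2$ and hence of infinite order in the abstract group, its image $\wak$ cannot be elliptic of either type, so it must be non-elliptic.

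For sufficiency I would first reduce the infinite family of test words to the single index $k$. Computing $\trace(\wal)$ for all $\ell\in\z$ and analysing its dependence on $\ell$ (which I expect to be convex, so that the extreme of the discriminant is attained at one index), I would show that under the stated inequalities non-ellipticity of $\wak$ forces non-ellipticity of every $\wal$ and of $\wb$. The windows $\tfrac{2}{k}\le\tfrac{r_1^2-1}{r_2^2-1}-1\le\tfrac{2}{k-1}$ in the hypothesis should be precisely the parameter ranges in which $\wak$ is the first member of the family to become elliptic, which is why a different $k$ governs each strip of the shaded region in Figure~\ref{fig-regions1}.

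Finally, with every $\wal$ and $\wb$ non-elliptic, I would establish discreteness and faithfulness simultaneously by exhibiting $\Ga$ as the free product $\langle I_1,I_2\rangle*\langle I_3\rangle\cong(\z/2\z)^{*3}$ via a Klein--Maskit combination (ping-pong) argument. The elements $\wal$ and $\wb$ are exactly the length-two cyclically reduced words pairing a dihedral element with $I_3$, and their non-ellipticity should be equivalent to the disjointness of the interaction regions (bisectors or isometric spheres) needed for combination. I expect this last step to be the main obstacle: verifying that the non-ellipticity inequality genuinely yields disjoint bounding hypersurfaces and a valid fundamental domain is delicate in complex hyperbolic space, where bisectors are not totally geodesic and the Poincar\'e polyhedron conditions are subtle, and reducing correctly from the infinite family to the single governing word $\wak$ is the other place where the precise hypotheses must be used.
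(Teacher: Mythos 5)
Your skeleton genuinely matches the paper's: the necessity direction (faithfulness rules out finite order, discreteness rules out elliptics of infinite order, and $\ga_1(\ga_2\ga_1)^k\ga_3$ is cyclically reduced hence of infinite order in $(\z/2\z)^{*3}$), the computation of $\trace(\wal)$ combined with the deltoid criterion, and the monotonicity analysis in $\ell$ showing that on the strip $\frac{2}{k}\le\frac{r_1^2-1}{r_2^2-1}-1\le\frac{2}{k-1}$ the word $\wak$ governs the whole family are exactly the paper's Lemmas on traces and on $\fa(\ell)$. Your structural observations are also correct: $I_2I_1$ is a Heisenberg translation, $\<I_1,I_2\>$ is infinite dihedral, and the target is $\Ga\cong\<I_1,I_2\>*\<I_3\>$ by a ping-pong argument.

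The genuine gap is the step you yourself flag as the main obstacle: you have no mechanism for verifying the ping-pong, and the route you propose (disjointness of bisectors or isometric spheres, Poincar\'e polyhedron conditions) is not how this becomes tractable. The paper's device is the compression criterion applied to the unit spinal sphere $U=\{(\zeta,\upsilon)\in\calN\st|\zeta|^4+\upsilon^2=1\}$: the inversion $I_3$ swaps the inside and outside of $U$, so it suffices to show that every $g\in\Gaprimstar$ maps the inside of $U$ properly off itself. The point that collapses all the delicacy is that $I_1,I_2$ are reflections in \emph{vertical} chains and hence permute vertical chains; since the inside of $U$ lies in the solid cylinder $\{|\zeta|<1\}$, which is a union of vertical chains, the entire verification descends to the plane $\c\times\{0\}$, where $\<I_1,I_2\>$ acts as the dihedral group $\La$ generated by the point-rotations $J_1,J_2$ through $\pi$ about $r_2e^{i\theta}$ and $-r_1e^{-i\theta}$. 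Compression then reduces to the elementary condition $|g(0)|\ge2$ for all $g\in\Lastar$ (the unit disc is moved off itself), and computing $|g(0)|$ for the two families $(J_2J_1)^\ell$ and $J_1(J_2J_1)^\ell$ yields exactly the inequalities $4r_1r_2\sin^2(\al/2)\ge\fb$ and $4r_1r_2\sin^2(\al/2)\ge\fa(\ell)$, which are the trace conditions; no bisector intersection analysis or Poincar\'e theorem is needed. One further correction to your plan: this disjointness condition is equivalent not to non-ellipticity of the test family but to $\trace(\wal)\ge3$ for all $\ell$ \emph{together with} $\Re(\trace(\wb))\le-5$, and the latter is strictly stronger than non-ellipticity of $\wb$. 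This is harmless here only because the hypothesis $\frac{1}{k}+\frac{k+1}{r_2^2-1}\le\frac{r_1^2-1}{r_2^2-1}-1$ is precisely the condition $\fa(k)\ge\fb$, so the single inequality coming from non-ellipticity of $\wak$ dominates the $\wb$ condition; deriving mere non-ellipticity of $\wb$, as you propose, would not suffice to close the argument.
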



\begin{prop}
\label{conditions2-typeB}
Suppose that either $m_1\ge m_2>0$ and for all~$k\in\z$, $k\ge1$,
$$\frac{r_1^2-1}{r_2^2-1}-1\le\frac{1}{k}+\frac{k+1}{r_2^2-1}$$
or $m_1\ge m_2=0$ and $r_1\le\sqrt{3}$,
where $r_j=\cosh(m_j/2)$, $j=1,2$.
Then a complex hyperbolic $[m_1,m_2,0]$-triangle group representation is discrete and faithful if $\Re(\trace(\wb))\le-5$.
\end{prop}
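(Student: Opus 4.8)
The plan is to prove discreteness and faithfulness simultaneously by a ping-pong (Klein combination) argument, using the trace bound on~$\wb$ to separate the relevant ping-pong regions, and then to read off non-ellipticity of~$\wb$ as a by-product. First I would normalise the configuration. Since $C_1$ and $C_2$ are asymptotic (because $m_3=0$), their closures meet at a single point $q\in\bchp$, and both $I_1$ and $I_2$ fix~$q$; I would place $q$ at infinity in the Siegel domain/Heisenberg model, so that the subgroup $G_{12}=\<I_1,I_2\>$ fixes~$q$ and acts on the horospheres about~$q$. In these coordinates I would verify that $I_1I_2$ is parabolic and that $G_{12}$ is discrete and isomorphic to the infinite dihedral group $(\z/2\z)*(\z/2\z)$, identifying it with a discrete subgroup of the stabiliser of~$q$.

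Next, since $I_3$ does not fix~$q$ (as $C_3$ lies at positive distance from both $C_1$ and $C_2$), its isometric sphere~$S$ is defined, together with the family of translates $gSg^{-1}$ for $g\in G_{12}$. I would compute the centres and radii of these spheres, and of a suitable horoball~$B$ about $q=\infty$, in terms of the invariants $r_1,r_2$ and the angular invariant~$\al$. The essential reduction is that the mutual position of~$S$ and its neighbouring $G_{12}$-translates is encoded by $\trace(\wb)$, where $\wb=I_1I_2I_3$: this is the element that combines the parabolic translation $I_1I_2$ with the reflection~$I_3$, so the condition that $I_3$ carries the exterior of~$B$ into the exterior of the sphere family is exactly a condition on~$\wb$.

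I would then show that $\Re(\trace(\wb))\le-5$ forces the spheres $gSg^{-1}$, $g\in G_{12}$, to be pairwise disjoint and disjoint from the horoball~$B$, so that their exteriors furnish the two precisely invariant ping-pong regions needed to combine $G_{12}$ with $\<I_3\>$. The Klein combination (ping-pong) theorem then yields $\Ga=G_{12}*\<I_3\>\cong(\z/2\z)^{*3}$, discrete and faithful, whence $\wb$ is in particular non-elliptic. The hypotheses on $m_1,m_2$ enter precisely to guarantee that the configuration lies in the regime where $\wb$, rather than any~$\wal$, governs the geometry, so that no $\wal$-sphere can intrude on the ping-pong regions; the separate case $m_2=0$, $r_1\le\sqrt3$ would be treated by the same scheme with $C_1$ now asymptotic to both $C_2$ and~$C_3$.

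I expect the main obstacle to be the third step: converting the single algebraic inequality $\Re(\trace(\wb))\le-5$ into the disjointness of the \emph{infinite} family of isometric spheres $gSg^{-1}$, where the accumulation of translates towards the parabolic fixed point~$q$ requires the most care. Establishing that $B$ and the region bounded by~$S$ are precisely invariant under the respective factors—so that the combination theorem applies cleanly across the cusp at~$q$—is the delicate point, and it is here that the constant~$-5$ should emerge as the threshold for the chosen family of spheres, consistent with its being sufficient but not sharp for the non-ellipticity of~$\wb$ itself.
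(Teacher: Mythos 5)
Your skeleton is, in substance, the paper's own argument in different clothing: the paper also normalises the common ideal point of $C_1$ and $C_2$ to $\infty$, and its ``compressing'' criterion is precisely the ping-pong you describe between $\Ga'=\<I_1,I_2\>$ and $\<I_3\>$, with your sphere $S$ (the isometric sphere of $I_3$) being the unit spinal sphere $U$. Two of your ingredients are unnecessary or replaced there: no horoball about $\infty$ is needed for the bare free-product conclusion (the interior and exterior of $S$ already serve as the two ping-pong sets, since $I_3$ swaps them), and instead of proving disjointness of the Cygan spheres $gSg^{-1}$ directly, the paper encloses the inside of $S$ in the solid cylinder $\{|\zeta|<1\}$, a union of vertical chains, and projects the $\Ga'$-action to the rotation group $\La=\<J_1,J_2\>$ acting on $\c$, so that everything reduces to the single statement $|g(0)|\ge2$ for all $g\in\Lastar$. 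One merit of your framing is that the free-product conclusion makes faithfulness explicit.

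The genuine gap is exactly the step you postpone as ``the main obstacle'': nothing in your proposal converts $\Re(\trace(\wb))\le-5$ together with the hypothesis on $m_1,m_2$ into separation of the \emph{infinite} family of translates, and that conversion is the entire mathematical content of the proposition, not a technicality that falls out of a good choice of spheres. Concretely, with $\theta=(\pi-\al)/2$, the separation conditions split into two families. For the Heisenberg translations $(I_2I_1)^{\ell}$ one needs $|\ell|\cdot|r_2e^{i\theta}+r_1e^{-i\theta}|\ge1$ for all $\ell\ne0$, i.e.\ just $b=|r_2e^{i\theta}+r_1e^{-i\theta}|\ge1$; since $\Re(\trace(\wb))=-4(r_1-r_2)^2-1-16r_1r_2\sin^2(\al/2)$, this single inequality is \emph{equivalent} to $\Re(\trace(\wb))\le-5$, both saying $4r_1r_2\sin^2(\al/2)\ge 1-(r_1-r_2)^2=\fb$. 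For the reflection-type elements $I_1(I_2I_1)^{\ell}$ one needs $4r_1r_2\sin^2(\al/2)\ge\fa(\ell)$ for every $\ell\in\z\setminus\{-1,0\}$, where $\fa(\ell)=\bigl(1-(\ell r_1-(\ell+1)r_2)^2\bigr)/\bigl(\ell(\ell+1)\bigr)$. So the constant $-5$ emerges already from the single nearest translation, not (as you predict) as a threshold for the whole family; the real work, which your plan leaves blank, is the comparison $\fb\ge\fa(\ell)$ for all $\ell$. For $\ell\ge1$ that comparison is equivalent to $r_1^2-1\le\frac{\ell+1}{\ell}(r_2^2-1)+(\ell+1)$, which is exactly the hypothesis of the proposition restated, and for $\ell\le-2$ it is automatic from $r_1\ge r_2$. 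Without this computation (the paper's lemmas on $\fa$, $\fb$ and on the traces of $\wal$ and $\wb$), your sentence that ``the hypotheses on $m_1,m_2$ enter precisely to guarantee that no $\wal$-sphere can intrude'' is a restatement of what must be proved, not a proof of it.
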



\myskip
We also demonstrate in Proposition~\ref{discrete} that in some cases Proposition~\ref{conditions2-typeA} can be used to prove discreteness for all values of the angular invariant.
These results are summarized in Figure~\ref{fig-regions3}.
The coordinates and the light shading are as in Figure~\ref{fig-regions1}.
The regions in Proposition~\ref{discrete} correspond to the darkly shaded regions under hyperbolae in Figure~\ref{fig-regions3}.

\begin{figure}[htbp]
\labellist
\pinlabel {$\frac{2}{3}$} [t] at 178 147
\pinlabel {$\frac{2}{4}$} [t] at 145 147
\pinlabel {$\frac{2}{5}$} [t] at 130 147
\pinlabel {$\frac{2}{3}$} [t] at 178 147
\pinlabel {$(2,\frac{1}{2})$} [br] at 435 500
\pinlabel {$(1,\frac{1}{6})$} [br] at 237 265
\pinlabel {$(\frac{2}{3},\frac{1}{12})$} [br] at 185 215
\endlabellist
\begin{center}
\includegraphics[width=0.5\textwidth]{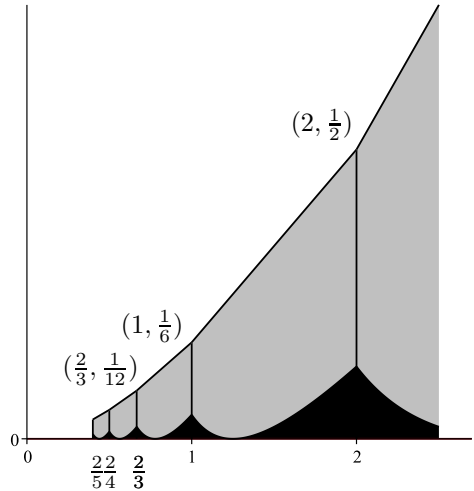}
\caption{Conditions of Proposition~\ref{discrete}}
\label{fig-regions3}
\end{center}
\end{figure}

\begin{prop}
\label{discrete}
Any complex hyperbolic ultra-parallel $[m_1,m_2,0]$-triangle group with $m_1\ge m_2\ge0$ is discrete
if the following condition on $r_j=\cosh(m_j/2)$, $j=1,2$ is satisfied:
$$
r_1-r_2\in\left(\bigcup\limits_{k=2}^{\infty}\left[\frac{r_2+1}{k},\frac{r_2-1}{k-1}\right]\right)
\cup\left[r_2+1,\infty\right).
$$
\end{prop}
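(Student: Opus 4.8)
The plan is to use Proposition~\ref{conditions2-typeA} to reduce discreteness to the non-ellipticity of a single word, and then to show that this word is non-elliptic for \emph{every} value of the angular invariant. First I would check that the hypothesis on $r_1-r_2$ forces the parameters into the region covered by Proposition~\ref{conditions2-typeA}. Writing
\[
X=\frac{r_1^2-1}{r_2^2-1}-1=\frac{(r_1-r_2)(r_1+r_2)}{(r_2-1)(r_2+1)},
\]
I would substitute the endpoints $r_1-r_2=\frac{r_2+1}{k}$ and $r_1-r_2=\frac{r_2-1}{k-1}$ (respectively $r_1-r_2\ge r_2+1$ for the case $k=1$) and verify by elementary algebra that the chain
\[
\max\left\{\frac1k+\frac{k+1}{r_2^2-1},\ \frac2k\right\}\le X\le\frac{2}{k-1}
\]
holds on the whole interval. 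Once this is established, Proposition~\ref{conditions2-typeA} applies with this value of $k$, so the representation is discrete and faithful if and only if $\wak$ is non-elliptic; it therefore suffices to prove that $\wak$ is non-elliptic for all $\al\in[0,2\pi]$.

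The second step is to compute $\tau(\al):=\trace(\wak)$ for an $\SU(2,1)$-lift of $\wak=I_1(I_2I_1)^{k}I_3$. Using the Gram matrix of the polar vectors $n_1,n_2,n_3$ (normalised so that $|\langle n_1,n_2\rangle|=1$, $|\langle n_2,n_3\rangle|=r_1$, $|\langle n_3,n_1\rangle|=r_2$, with the triangle phase recording $\al$) together with the reflection formula $I_j(z)=z-2\langle z,n_j\rangle n_j$, the trace is an explicit expression in $r_1,r_2,k$ in which the angular invariant enters only through the phase $e^{i\al}$. Hence, as $\al$ ranges over $[0,2\pi]$, the point $\tau(\al)$ runs over a circle in $\c$ (or, if both $e^{\pm i\al}$ occur, an ellipse or a segment) whose centre and radius I would read off in terms of $r_1,r_2$ and $k$. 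I expect the radius to grow with $k$ and with $r_1$, reflecting the fact that $I_1I_2$ is parabolic and that $(I_2I_1)^{k}$ translates farther as $k$ increases.

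The final step is Goldman's trace criterion: for $A\in\SU(2,1)$ with $\tau=\trace(A)$ one has $f(\tau)=|\tau|^4-8\Re(\tau^3)+18|\tau|^2-27$, with $A$ regular elliptic exactly when $f(\tau)<0$, loxodromic when $f(\tau)>0$, and $A$ having a repeated eigenvalue when $f(\tau)=0$; the elliptic locus is the interior of the deltoid $\{f=0\}$ with cusps at $3$ and $3e^{\pm2\pi i/3}$. I would show that for $r_1-r_2$ in the stated intervals the whole trace-curve $\{\tau(\al)\}$ lies in $\{f\ge0\}$ and meets $\{f=0\}$ only at parabolic elements, so that $\wak$ is loxodromic or parabolic for every $\al$. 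The main obstacle is precisely this deltoid-avoidance estimate, since $f\ge0$ is a non-convex quartic condition. I expect the cleanest route to exploit the special structure of $\tau(\al)$: should the trace be real, $f$ factors as $f(\tau)=(\tau-3)^3(\tau+1)$ and the condition collapses to $\tau\le-1$ or $\tau\ge3$, easily checked at the extreme values of $\al$; otherwise I would bound $\Re\tau(\al)$ and $|\tau(\al)|$ over $\al$ and verify avoidance at the worst-case $\al$. In either case I anticipate that the two endpoints $r_1-r_2=\frac{r_2+1}{k}$ and $r_1-r_2=\frac{r_2-1}{k-1}$ correspond exactly to tangency of the trace-curve with the deltoid, that is, to $\wak$ becoming parabolic; this explains why the gaps between consecutive intervals (where $\wak$ becomes elliptic for some $\al$) must be excluded from the union.
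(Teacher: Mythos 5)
Your reduction to Proposition~\ref{conditions2-typeA} fails at its first step: the hypothesis of Proposition~\ref{discrete} with interval index $k$ does \emph{not} imply the hypothesis of Proposition~\ref{conditions2-typeA} with the same $k$, so the ``elementary algebra'' you defer to cannot succeed. Concretely, take $k=2$, $r_2=5$, $r_1=9$, so that $r_1-r_2=4\in\bigl[\frac{r_2+1}{2},\frac{r_2-1}{1}\bigr]=[3,4]$, yet
$$X=\frac{r_1^2-1}{r_2^2-1}-1=\frac{56}{24}=\frac{7}{3}>2=\frac{2}{k-1},$$
violating your upper bound. The failure is systematic: at the endpoint $r_1-r_2=\frac{r_2-1}{k-1}$ one has $X\le\frac{2}{k-1}$ if and only if $r_2\le 2k-1$, while the interval $\bigl[\frac{r_2+1}{k},\frac{r_2-1}{k-1}\bigr]$ is non-empty only when $r_2\ge 2k-1$; so your chain holds on the whole interval only in the degenerate case $r_2=2k-1$, when the interval is a single point. (Indeed, at that endpoint $(k-1)r_1-kr_2=-1$, so it is $\wa^{(k-1)}$, not $\wak$, whose trace reaches the deltoid cusp $3$ as $\al\to0$: the correct index drifts as $r_1-r_2$ moves along the interval.) A repaired argument must choose a different index $k'$ (in the example above, $k'=1$ works), prove the region condition $\frac{1}{k'}+\frac{k'+1}{r_2^2-1}\le X\le\frac{2}{k'-1}$ for that $k'$, and prove $\bigl(k'r_1-(k'+1)r_2\bigr)^2\ge1$ --- which, since $\trace(\wa^{(k')})$ is real (so your trace curve is a segment, not a circle) and equals $4\bigl(k'r_1-(k'+1)r_2\bigr)^2-1+16k'(k'+1)r_1r_2\sin^2(\al/2)$ by Lemma~\ref{lem-traces}, is exactly what your ``worst case $\al\to0$'' check amounts to. None of these steps is routine: they need both endpoint bounds on $r_1$ together with $r_2\ge2k-1$. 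A further defect is that Proposition~\ref{conditions2-typeA} assumes $m_2>0$, so the case $m_2=0$ (where the hypothesis of Proposition~\ref{discrete} reduces to $r_1\ge3$) cannot be routed through it at all.

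For comparison, the paper's proof does not pass through Proposition~\ref{conditions2-typeA} and needs no deltoid analysis: it rewrites the hypothesis as $r_1\ge2r_2+1$, or $\frac{(k+1)r_2+1}{k}\le r_1\le\frac{kr_2-1}{k-1}$ with $r_2\ge2k-1$, and shows by direct estimates that then $\fb=1-(r_1-r_2)^2\le0$ and $\fa(\ell)\le0$ for \emph{every} $\ell\in\z\setminus\{-1,0\}$. Since $4r_1r_2\sin^2(\al/2)\ge0$ always, the inequalities of Lemma~\ref{lem4new} (which involve all $\ell$ at once) then hold for every value of~$\al$, and discreteness follows from Lemmas~\ref{lem1} and~\ref{lem-*}. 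Your intended route does exist in the paper in corrected form, namely Proposition~\ref{hyper-Phi-k}, but there the index is tied to the position of $X$ rather than to the interval index, and its proof still reduces to the same key inequality $\bigl(kr_1-(k+1)r_2\bigr)^2\ge1$ that the direct proof establishes.
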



We also show the following discreteness test that is easier to check but less powerful than Propositions~\ref{conditions2-typeA} and~\ref{conditions2-typeB}.
It was first proved in the PhD thesis of the first author (\cite{Mo}, Theorem~3.3.0.8, p.~113) and generalises the results by J.~Wyss-Gallifent (\cite{WG}, Chapter~4) about ultra-parallel $[m,m,0]$-triangle groups.

\begin{prop}
\label{conditions1}
A complex hyperbolic ultra-parallel $[m_1,m_2,0]$-triangle group with angular invariant~$\al$ is discrete if 
$$\sin\left(\frac{\al}{2}\right)\ge\frac{1}{r_1+r_2},$$
where $r_j=\cosh(m_j/2)$, $j=1,2$.
\end{prop}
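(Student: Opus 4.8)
The plan is to exhibit a fundamental domain for the group $G=\langle I_1,I_2,I_3\rangle$ and apply the Poincaré polyhedron theorem. First I would normalise coordinates using the Siegel domain model of $\chp$, placing the common ideal point $p_\infty$ of the asymptotic mirrors $C_1$ and $C_2$ (which have distance $m_3=0$) at infinity. In this model $I_1$ and $I_2$ both fix $p_\infty$ and act as Heisenberg isometries: each is rotation by $\pi$ about a vertical chain, so the product $I_1I_2$ is a (screw-)parabolic Heisenberg translation, and $\Ga_\infty=\langle I_1,I_2\rangle$ is an infinite dihedral group acting properly discontinuously on the Heisenberg boundary. After choosing explicit matrices in $\PU(2,1)$ for $I_1,I_2,I_3$ normalised by $r_1,r_2$ and $\al$, I would record the translation length of $I_1I_2$ and the radius of the isometric sphere of $I_3$, which is centred at $I_3(p_\infty)$ since $I_3$ is an involution.

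Next I would assemble the candidate domain $D$ as the intersection of a vertical fundamental region $D_\infty$ for the action of $\Ga_\infty$ on the horospheres based at $p_\infty$ — bounded by vertical walls paired by the generators $I_1$ and $I_2$ — with the exterior of the isometric sphere $S_3$ of $I_3$. The side-pairings are then the obvious ones: the vertical walls are paired among themselves by elements of $\Ga_\infty$, while $S_3$ is paired with itself by the involution $I_3$.

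The crux is to show that $S_3$ together with all of its $\Ga_\infty$-translates are pairwise disjoint, equivalently that $S_3$ sits strictly inside $D_\infty$ between consecutive vertical walls. Since the translates are obtained by applying powers of the parabolic $I_1I_2$, this disjointness amounts to comparing the Cygan radius of $S_3$ with the translation length of $I_1I_2$ on the Heisenberg boundary. I expect this to be the main obstacle, and I expect the required inequality to reduce, after the explicit computation of both quantities in terms of $r_1$, $r_2$ and $\al$, to precisely $\sin(\al/2)\ge 1/(r_1+r_2)$.

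Finally, granted this disjointness, $D$ is a polyhedron whose side-pairing maps generate $G$, and I would verify the ridge cycles required by the Poincaré polyhedron theorem. I expect the only nontrivial cycles to occur where $S_3$ meets the vertical walls of $D_\infty$, and these should close up because $I_3$ is an involution and the wall-pairings come from $\Ga_\infty$. The theorem then yields that $D$ is a fundamental domain, whence $G$ is discrete. Note that only discreteness, and not faithfulness, is claimed here, so it suffices that the cycle relations are consequences of the defining relations $I_k^2=\id$.
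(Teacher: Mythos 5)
There is a genuine gap at what you yourself call the crux, and it is quantitative, not cosmetic. Your claimed equivalence between ``$S_3$ and all of its $\Ga_\infty$-translates are pairwise disjoint'' and ``$S_3$ sits strictly inside $D_\infty$ between consecutive vertical walls'' is false: containment in a fundamental domain is strictly stronger, and it is the containment statement that your Poincar\'e argument actually needs (if $S_3$ protrudes from $D_\infty$, then $I_3$ maps the side $S_3\cap\overline{D}_\infty$ onto $S_3\cap I_3(\overline{D}_\infty)$, which is not a side of your polyhedron, so the side-pairing structure collapses). Moreover the translates of $S_3$ are not obtained only from powers of the parabolic $I_1I_2$: the group $\<I_1,I_2\>$ is infinite dihedral, and its odd elements $I_1(I_2I_1)^{\ell}$ are involutions whose axes project to rotation centres spaced at \emph{half} the translation length along the axis of the planar group $\La=\<J_1,J_2\>$. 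Hence any fundamental slab for $\Ga_\infty$ with walls self-paired by the generators has width $b=|r_2e^{i\theta}+r_1e^{-i\theta}|$, the distance between consecutive rotation centres, and since the vertical projection of $S_3$ is the closed unit disc of diameter $2$, containment forces $b>2$. But the hypothesis of the proposition only yields $b\ge(r_1+r_2)\cos\theta\ge1$. Concretely, take $r_1=r_2=r$ and $\sin(\al/2)=\cos\theta=1/(2r)$: the rotation centres of $\La$ are the points $re^{i\theta}+\ell$, $\ell\in\z$, every slab has width $1$, and $S_3$ straddles its walls --- yet the hypothesis holds (with equality) and the group is discrete. Carried out, your construction proves discreteness only when $b>2$, which in the isosceles case means $\sin(\al/2)>2/(r_1+r_2)$, twice the stated threshold. (Your plan is also internally inconsistent: you require $S_3$ strictly inside $D_\infty$, and then propose to analyse ridge cycles ``where $S_3$ meets the vertical walls'' --- under your own hypothesis those ridges do not exist.)

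The paper's proof avoids exactly this trap by using the compression (ping-pong) criterion of Schwartz and Wyss-Gallifent instead of the Poincar\'e theorem. There one does not need $S_3$ inside a fundamental domain of $\<I_1,I_2\>$; it suffices (Lemma~\ref{lem1}) that every nontrivial element of $\Ga'=\<I_1,I_2\>$ maps the cylinder of vertical chains over the open unit disc completely off itself. Since $\Ga'$ permutes vertical chains, this reduces to the planar condition $|g(0)|\ge2$ for all $g\in\Lastar$, i.e.\ to the conditions $a(\ell)\ge1$, $b\ge1$ of Lemma~\ref{lem-*}. The reason this weaker condition survives where your slab condition fails is that the binding elements are the involutions, and their rotation centres, though within distance $1$ of the origin when measured along the axis of $\La$, are far from the origin in the perpendicular direction: $a(\ell)\ge|\ell r_1+(\ell+1)r_2|\cos\theta\ge(r_1+r_2)\cos\theta\ge1$ (Lemma~\ref{lem3}). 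If you insist on a fundamental-domain formulation, the domain would have to be the common exterior of \emph{all} translates $g(S_3)$, $g\in\Ga'$, with infinitely many side pairings $gI_3g^{-1}$ --- which is, in substance, the compression argument the paper gives.
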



\myskip
In contrast to the discreteness results we also prove the following non-discreteness results obtained using Shimizu's lemma~\cite{Par97}:

\begin{prop}
\label{non-discrete}
A complex hyperbolic ultra-parallel $[m_1,m_2,0]$-triangle group with angular invariant~$\al$ is non-discrete if one of the following conditions on $r_j=\cosh(m_j/2)$, $j=1,2$ is satisfied:
\begin{enumerate}[$\bullet$]
\item
$34r_1^2r_2^2-15r_1^4-15r_2^4+2r_1^2+2r_2^2\ge0$ and
$$
  64r_1r_2\sin^2\left(\frac{\al}{2}\right)
  <32r_1r_2-15r_1^2-15r_2^2+1-\sqrt{34r_1^2r_2^2-15r_1^4-15r_2^4+2r_1^2+2r_2^2}.
$$
\item
$34r_1^2r_2^2-15r_1^4-15r_2^4+2r_1^2+2r_2^2<0$ and
$$64r_1r_2\sin^2\left(\frac{\al}{2}\right)<1-16(r_1-r_2)^2.$$
\end{enumerate}
\end{prop}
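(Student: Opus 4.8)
The plan is to apply the complex hyperbolic version of Shimizu's lemma from~\cite{Par97} in its contrapositive form. Since $m_3=0$, the complex geodesics $C_1$ and $C_2$ meet at a single point of the boundary $\bchp$, so the product $I_1I_2$ is parabolic and fixes that point, while $C_3$ is ultra-parallel (at positive distance) to both $C_1$ and $C_2$, so $I_3$ does not fix it. First I would conjugate by an element of $\PU(2,1)$ so that this common boundary point is $\infty$ in the Siegel domain model; then $A=I_1I_2$ becomes a Heisenberg translation fixing $\infty$ and $I_3(\infty)\ne\infty$. Shimizu's lemma asserts that in a discrete group the radius of the isometric (Cygan) sphere of an element not fixing $\infty$ is controlled by the translation length of a parabolic fixing $\infty$; hence if that radius is too large relative to the translation length of $A$, the group cannot be discrete.

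Second I would carry out the explicit matrix computation. Writing $I_1,I_2,I_3$ as order-two complex reflections with polar vectors $n_1,n_2,n_3$, the ultra-parallel data $m_1,m_2$ fix the Hermitian products $\<n_{k-1},n_{k+1}\>$ up to phase, and the single remaining phase is exactly the angular invariant~$\al$ (see section~\ref{def-angular-invariant}). In the normalised coordinates I would read off two quantities: the Heisenberg translation length of the parabolic $A=I_1I_2$ and the radius $R$ of the isometric sphere of $I_3$, both as explicit functions of $r_1=\cosh(m_1/2)$, $r_2=\cosh(m_2/2)$ and~$\al$. The dependence on $\al$ enters through $\sin^2(\al/2)$, which is why the final conditions are stated in terms of that quantity.

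Third I would feed these lengths into the inequality of~\cite{Par97}. Because $A$ is in general a Heisenberg translation with both a horizontal and a vertical part, the sharp form of Shimizu's lemma yields not a single linear bound but a quadratic constraint on the relevant lengths. Solving this quadratic is what introduces the radical: its discriminant works out to $34r_1^2r_2^2-15r_1^4-15r_2^4+2r_1^2+2r_2^2$. When this discriminant is non-negative the quadratic has real roots and the failure of discreteness is governed by the root containing the square root, giving the first displayed condition; when it is negative one instead falls back on the cruder bound coming from the vertical translation alone, which produces the threshold $1-16(r_1-r_2)^2$ of the second case. In both cases the negation of the Shimizu inequality rearranges to the stated upper bound on $64r_1r_2\sin^2(\al/2)$, so the representation violates the necessary condition for discreteness and must be non-discrete.

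I expect the main obstacle to be twofold. The routine-looking but delicate part is the bookkeeping in the normalised coordinates: correctly extracting both the vertical and horizontal components of $A=I_1I_2$ and the isometric-sphere radius of $I_3$ as clean functions of $r_1,r_2,\al$, since an error in any phase convention propagates into the final constants. The genuinely subtle part is the case analysis driven by the sign of $34r_1^2r_2^2-15r_1^4-15r_2^4+2r_1^2+2r_2^2$: one must check that the radical in the first case is real precisely on the stated region, and that on the complementary region the simpler bound is the one the refined Shimizu inequality actually delivers, so that the two cases together exhaust all situations in which this method forces non-discreteness.
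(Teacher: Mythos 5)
Your proposal takes essentially the same route as the paper: the paper also applies Parker's complex hyperbolic Shimizu lemma to the Heisenberg translation $g=I_2I_1$ (by $(\xi,v)$ with $\xi=2(r_1e^{-i\theta}+r_2e^{i\theta})$, $v=8r_1r_2\sin(2\theta)$) and to $h=I_3$ with isometric sphere of radius $1$, then negates the inequality $1\le\sqrt{|\xi|^4+v^2}+4|\xi|^2$, squares, and does exactly your case analysis on the sign of the discriminant $34r_1^2r_2^2-15r_1^4-15r_2^4+2r_1^2+2r_2^2$ of the resulting quadratic in $X=64r_1r_2\sin^2(\al/2)$. One small correction of detail: the threshold $1-16(r_1-r_2)^2$ in the second case does not come from the vertical translation alone but from the horizontal part, namely the constraint $4|\xi|^2<1$ that must hold before squaring; when the quadratic has no real roots it is automatically positive, and this linear constraint is the only one that survives.
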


The paper is organised as follows:
In section~\ref{sec-basics} we summarise the necessary basics in complex hyperbolic and Heisenberg geometry.
We introduce the standard parametrisation for ultra-parallel $[m_1,m_2,0]$-triangle groups in section~\ref{sec-param}.
In section~\ref{sec-discr-results} we use the compression property to derive discreteness conditions for $[m_1,m_2,0]$-groups
and prove Propositions~\ref{conditions2-typeA}, \ref{conditions2-typeB} and~\ref{conditions1}.
We use these discreteness conditions in section~\ref{sec-discr-for-all-alpha} to show in some cases the discreteness for all values of the angular invariant and prove Proposition~\ref{discrete}.
In section~\ref{sec-non-discr} we use a version of Shimizu's lemma to show some non-discreteness results and prove Proposition~\ref{non-discrete}.
In section~\ref{sec-ellipticity} we recall the conjecture of R.~Schwartz in more detail and put our results in the context of this general conjecture. 
In section~\ref{sec-isosceles} we summarise all our results in the case of isosceles triangles $m_1=m_2$.

\section{Basics}

\label{sec-basics}

\myskip
We will first recall some basic notions of the complex hyperbolic geometry.
For general references on complex hyperbolic geometry and complex hyperbolic triangle groups see~\cite{Gol,Par09,Par10}.

\subsection{Complex hyperbolic plane:}
Let $\c^{2,1}$ denote the vector space $\c^3$ equipped with a Hermitian form of signature~$(2,1)$,
for example $\<z,w\>=z_1\bar w_3+z_2\bar w_2+z_3\bar w_1$.
We call a vector $z\in\c^{2,1}$ {\defit negative}, {\defit null\/} or {\defit positive}
if $\<z,z\>$ is negative, zero or positive respectively.
Let $P(\c^{2,1})$ denote the projectivisation of~$\c^{2,1}-\{0\}$.
We denote the image of $z=(z_1,z_2,z_3)\in\c^{2,1}$ under the projectivisation map by $[z]=[z_1:z_2:z_3]$.
The {\defit complex hyperbolic plane\/} $\chp$ is the projectivisation of the set of negative vectors in~$\c^{2,1}$,
equipped with the {\defit Bergman metric\/} given by
$$\cosh^2\left(\frac{\dist([z],[w])}{2}\right)=\frac{\<z,w\>\< w,z\>}{\<z,z\>\<w,w\>}.$$
The ideal boundary $\dd\chp$ of $\chp$ is defined as the projectivisation of the set of null vectors in~$\c^{2,1}-\{0\}$.

\subsection{Isometries:}
The holomorphic isometry group of~$\chp$ is the projectivisation $\PU(2,1)$
of the group of those complex linear transformations which preserve the Hermitian form.
Isometries can be classified according to their fixed point behaviour,
an isometry is {\defit elliptic\/} if it has at least one fixed point in~$\chp$,
{\defit parabolic\/} if it has one fixed point in~$\dd\chp$
and {\defit loxodromic\/} if it has two fixed points in~$\dd\chp$.
An isometry is called {\defit regular elliptic} if for the corresponding element in~$\SU(2,1)$ all eigenvalues are distinct.
The type of an isometry can be determined from the position of the trace of the corresponding matrix in the complex plane.
The deltoid curve
$$\De=\{z\in\c\st|z|^4-8\Re(z^3)+18|z|^2=27\}$$
has the property that an isometry~$A$ in~$\SU(2,1)$
is regular elliptic if and only if $\trace(A)$ is inside~$\De$ and is loxodromic if and only if $\trace(A)$ is outside~$\De$
(see \cite{Gol}, Theorem~6.2.4).

\subsection{Complex geodesics:}
A {\defit complex geodesic\/} is a projectivisation of a $2$-di\-men\-sio\-nal complex subspace of~$\c^{2,1}$.
Any positive vector $c\in\c^{2,1}$ determines a complex geodesic
$$P(\{z\in\c^{2,1}\st\<c,z\>=0\}).$$
Conversely, any complex geodesic is of this form for some positive vector~$c\in\c^{2,1}$,
called a {\defit polar vector\/} of the complex geodesic.
A polar vector is unique up to multiplication by a complex scalar.
We say that the polar vector~$c$ is {\defit normalised\/} if $\<c,c\>=1$.

\myskip
A typical example is the complex geodesic $\{[z:0:1]\in\chp\}$ with polar vector $c=(0,1,0)$.
Any complex geodesic is isometric to this one.

\myskip
Let $C_1$ and $C_2$ be complex geodesics with normalised polar vectors $c_1$ and $c_2$ respectively.
Then $C_1$ and $C_2$ intersect in $\dd\chp$ if and only if $|\<c_1,c_2\>|=1$.
We call $C_1$ and $C_2$ {\defit ultra-parallel\/} if they have no points of intersection in $\chp\cup\dd\chp$,
in which case $|\<c_1,c_2\>|>1$ and $|\<c_1,c_2\>|=\cosh\left(\frac12\cdot\dist(C_1,C_2)\right)$,
where $\dist(C_1,C_2)$ is the distance between $C_1$ and $C_2$.

\subsection{Complex reflections:}
Given a complex geodesic $C$,
there is a unique isometry $I_C$ in~$\PU(2,1)$ of order~$2$, whose fixed point set is equal to~$C$.
This isometry is called the {\defit complex reflection of order}~$2$ in~$C$ (or {\defit inversion\/} on~$C$) and is given by
$$I_C(z)=-z+2\frac{\<z,c\>}{\<c,c\>}c,$$
where $c$ is a polar vector of~$C$.
(Unlike real reflections, complex reflections in complex geodesics can be of arbitrary order.
We will only treat the order~$2$ case in this paper.)

\subsection{Complex hyperbolic triangle groups:}
A {\defit complex hyperbolic triangle\/} is a triple $(C_1,C_2,C_3)$ of complex geodesics in~$\chp$.
For a triple~$(m_1,m_2,m_3)$, where each of the numbers~$m_j$ is non-negative,
we say that a triangle $(C_1,C_2,C_3)$ is a {\defit complex hyperbolic ultra-parallel $[m_1,m_2,m_3]$-triangle\/}
if the complex geodesics $C_{j-1}$ and $C_{j+1}$ are ultra-parallel at distance~$m_j$.
A {\defit complex hyperbolic ultra-parallel $[m_1,m_2,m_3]$-triangle group\/}
is a subgroup of~$\PU(2,1)$ generated by complex reflections~$I_j$ of order~$2$ in the sides~$C_j$
of a complex hyperbolic ultra-parallel $[m_1,m_2,m_3]$-triangle $(C_1,C_2,C_3)$.

\subsection{The space of complex hyperbolic triangle groups:}
\label{def-angular-invariant}
For a given triple $m_1,m_2,m_3$ the space of $[m_1,m_2,m_3]$-triangles is of real dimension one.
We now describe a parameterisation of the space of complex hyperbolic triangles in~$\chp$
by means of an angular invariant $\al$ (see section~3 in~\cite{Pra} for details).
Let~$(C_1,C_2,C_3)$ be a complex hyperbolic triangle.
Let $c_k$ be the normalised polar vector of the complex geodesic~$C_k$.
We define the {\defit angular invariant}~$\al$ of the triangle $(C_1,C_2,C_3)$ as
$$\al=\arg\left(\prod_{k=1}^3\<c_{k-1},c_{k+1}\>\right).$$
An ultra-parallel complex hyperbolic triangle in $\chp$ is determined uniquely up to isometry
by the three distances between the complex geodesics and the angular invariant~$\al$.
For any~$\al\in[0,2\pi]$ an $[m_1,m_2,m_3]$-triangle with the angular invariant~$\al$ exists if and only if
$$\cos\al<\frac{r_1^2+r_2^2+r_3^2-1}{2r_1r_2r_3},$$
where~$r_j=\cosh(m_j/2)$.
In the case $m_3=0$ we have $r_3=1$ and therefore
$$\frac{r_1^2+r_2^2+r_3^2-1}{2r_1r_2r_3}=\frac{r_1^2+r_2^2}{2r_1r_2}\ge1,$$
thus for every~$\al\in(0,2\pi)$
there exists an $[m_1,m_2,0]$-triangle with the angular invariant~$\al$.


\subsection{Heisenberg group:}
In the same way that the boundary of the real hyperbolic space is the one point compactification of the Euclidean space of one dimension lower,
we may identify the boundary $\dd\chp$ with $\calN=\c\times\r\cup\{\infty\}$,
a one point compactification of the Heisenberg group.
One such homeomorphism taking $\dd\chp$ to $\calN$ is given by the stereographic projection:
$$
[z_1:z_2:z_3]\longmapsto \left(\frac{z_2}{z_3\sqrt{2}},\Im\left(\frac{z_1}{z_3}\right)\right)
\quad\text{if }~z_3\ne0; \qquad [z:0:0]\longmapsto\infty.
$$

\subsection{Chains:}
A complex geodesic is homeomorphic to a disc,
its intersection with the boundary of the complex hyperbolic plane is homeomorphic to a circle.
Circles that arise as the boundaries of complex geodesics are called {\defit chains\/}.
From two distinct points on a chain we can retrieve the complex geodesic through them,
so there is a bijection between chains and complex geo\-de\-sics.
We now discuss the representations of the chains in Heisenberg space~$\calN$, see~\cite{Gol}, \cite{Par10} for more details.
Chains passing through~$\infty$ are represented as vertical straight lines defined by 
$\zeta = \zeta_0$,
such chains are called {\defit vertical\/}.
The vertical chain defined by $\zeta=\zeta_0$ consists of all points 
$[z_1:\sqrt{2}\zeta_0z_2:z_2]$ in $P(\c^{2,1})$. It has normalised polar vector
$[-\sqrt{2}\bar\zeta_0:1:0]$.
A chain not containing~$\infty$ is called {\defit finite\/}.
A finite chain is represented by an ellipse whose vertical projection $\c\times\r\rightarrow\c$
is a (Euclidean) circle in~$\c$.
The finite chain with centre $(\zeta_0,\upsilon_0)\in\calN$ and radius $r_0>0$ has polar vector
$$\big[r_0^2-|\zeta_0|^2+iv_0:\sqrt{2}\zeta_0:1\bigr]$$
and consists of all $(\zeta,\upsilon)\in\calN$ satisfying the equations
$$
  |\zeta-\zeta_0|=r_0,
  \qquad
  \upsilon=\upsilon_0-2\Im(\zeta\bar{\zeta}_0).
$$
In particular, the finite chain with centre~$(0,0)$ and radius~$1$ is the unit circle in the $\c\times\{0\}$ plane
and has polar vector $[1:0:1]$, hence it has a normalised polar vector $[1/\sqrt{2}:0:1/\sqrt{2}]$.

\subsection{Heisenberg isometries:}
The Heisenberg group~$\calN$ is equipped with the {\defit Cygan metric}
$$
\rho_0((\zeta_1,v_1),(\zeta_2,v_2))
=\Big||\zeta_1-\zeta_2|^2-i(v_1-v_2)-2i\Im(\zeta_1\bar\zeta_2)\Big|^{1/2}.
$$
A {\defit Heisenberg translation\/} by~$(\tau,t)\in\calN$ is given by
$$
(\zeta,v)\mapsto(\tau,t)+(\zeta,v)=(\zeta+\tau,v+t+2\Im(\zeta\bar{\tau}))
$$
and corresponds to the following matrix in~$\PU(2,1)$ (see~\cite{Gol}, section~4.2):
$$
\left(\begin{matrix} 1 & -\sqrt{2}\bar\tau & -|\tau|^2+it \\
0 & 1 & \sqrt{2}\tau \\ 0 & 0 & 1 \end{matrix}\right).
$$
There is a bijection between chains and complex geo\-de\-sics.
We can therefore, without loss of generality, talk about reflections in chains instead of reflections in complex geodesics.
An inversion~$I_{C_{\zeta_0}}$ in a vertical chain $C_{\zeta_0}$ which intersects 
$\c\times\{0\}$ at $\zeta_0$ and has the polar vector 
$c_{\zeta_0}=\bigl[-\sqrt{2}\bar\zeta_0:1:0\bigr]$ 
corresponds to the following element in~$\PU(2,1)$:
$$
\left(\begin{matrix}
-1 & -2\sqrt{2}\bar\zeta_0 & 4|\zeta_0|^2 \\ 
0 & 1 & -2\sqrt{2}\zeta_0 \\ 0 & 0 & -1 \end{matrix}\right)
$$
For an element~$h=(h_{ij})_{1\le i,j\le3}\in\SU(2,1)$ with $h(\infty)\ne\infty$ we can define the {\defit isometric sphere} of~$h$ as the sphere with respect to the Cygan metric
with centre~$h^{-1}(\infty)$ and radius $r_h=1/\sqrt{|h_{31}|}$ 
see~\cite{Par97} and section~5.4 in~\cite{Gol}.

\subsection{Products of reflections in chains:}
What effect does an inversion in a vertical chain have on another vertical chain?
Suppose we have vertical chains $C_{\zeta}$ and $C_{\xi}$ which intersect $\c\times\{0\}$ at $\zeta$ and $\xi$ and have polar vectors
$$c_{\zeta}=\bigl[-\sqrt{2}\bar\zeta:1:0\bigr]\qquad\text{and}\qquad c_{\xi} =\bigl[-\sqrt{2}\bar{\xi}:1:0\bigr]$$
respectively.
What effect does the inversion in $C_{\zeta}$ have on $C_{\xi}$?
We calculate
$$
  I_{C_{\zeta}}(z)
  =\left(\begin{matrix}
-1 & -2\sqrt{2}\bar\zeta_0 & 4|\zeta_0|^2 \\ 
0 & 1 & -2\sqrt{2}\zeta_0 \\ 0 & 0 & -1 \end{matrix}\right)
\left(\begin{matrix}-\sqrt{2}\bar{\xi} \\ 1 \\ 0 \end{matrix}\right)
=\left(\begin{matrix}-\sqrt{2}(2\bar\zeta-\bar{\xi}) \\ 1 \\ 0 \end{matrix}\right)
$$
which is a polar vector of the vertical chain that intersects $\c\times\{0\}$ at $2\zeta-\xi$.
Therefore inversion in the vertical chain~$C_{\zeta}$
rotates the vertical chain~$C_{\xi}$ as a set around~$C_{\zeta}$ through~$\pi$.

\subsection{Bisectors and spinal spheres:}
Unlike in the real hyperbolic space, there are no totally geodesic real hypersurfaces in $\chp$.
An acceptable substitute is the collection of metric bisectors:
Let $z_1,z_2\in\chp$ be two distinct points.
The {\defit bisector\/} equidistant from~$z_1$ and~$z_2$ is defined as 
$$\{z\in\chp\st\rho(z_1,z)=\rho(z_2,z)\}.$$


\myskip
A {\defit spinal sphere\/} is an intersection of a bisector with the boundary of~$\chp$.
It is a smooth hypersurface in $\dd\chp$, diffeomorphic to a sphere.

\myskip
An example is the bisector
$$
\mathfrak{C}=\bigl\{[z_1:z_2:z_3]\in\chp\st |z_1|=|z_3|\bigr\}
$$
and its boundary, the {\defit unit spinal sphere}, which can be described as 
$$U=\{(\zeta,\upsilon)\in\calN:|\zeta|^4+\upsilon^2=1\}.$$

\myskip
For more details on bisectors and spinal spheres see~\cite{Gol}.





\subsection{A discreteness criterion:}
Let $I_1$, $I_2$ and $I_3$ be reflections in the complex geodesics $C_1$, $C_2$ and $C_3$ respectively.
Let $\Ga$ be the group generated by $I_1$, $I_2$ and $I_3$.
Let $\Ga'$ be the group generated by $I_1$ and $I_2$.
We say that the group $\Ga$ is {\defit compressing\/} if 
there exist subsets $U_1$, $U_2$, $V$ of $\calN$ with $U_1\cap U_2=\emptyset$ and $V\varsubsetneq U_1$
such that
\begin{enumerate}
\item
$I_3(U_1)=U_2$;
\item
$g(U_2)\subsetneq V$ for all elements~$g\in\Gaprimstar$.
\end{enumerate}

\myskip
We will use the following discreteness criterion used by Schwartz and Wyss-Gallifent~\cite{WG}: If $\Ga$ is compressing, then $\Ga$ is a discrete subgroup of $\PU(2,1)$.

\section{A Parametrisation of $[m_1,m_2,0]$-Triangle Groups}

\label{sec-param}

\myskip\noindent
For $r_1,r_2\ge1$ and $\al\in(0,2\pi)$ let $C_1$, $C_2$ and $C_3$ be the complex geodesics with respective normalised polar vectors
$$
c_1=\left(\begin{matrix} \sqrt{2}r_2e^{-i\theta} \\ 1 \\ 0 \end{matrix}\right), \quad
c_2=\left(\begin{matrix} -\sqrt{2}r_1e^{i\theta} \\ 1 \\ 0 \end{matrix}\right), \quad
c_3=\left(\begin{matrix} 1/\sqrt{2} \\ 0 \\ 1/\sqrt{2} \end{matrix}\right).
$$
where $\theta=(\pi-\al)/2\in(-\pi/2,\pi/2)$.
The type of the triangle formed by $C_1$, $C_2$ and $C_3$ is determined by 
$$
 |\<c_3, c_2\>|=r_1,\quad
 |\<c_1,c_3\>|=r_2,\quad
 |\<c_2,c_1\>|=1
$$
and the angular invariant
$$
  \arg\left(\prod^{3}_{k=1}\<c_{k-1},c_{k+1}\>\right)
  =\arg(-r_1r_2 e^{-2i\theta})
  =\pi-2\theta
  =\al.
$$
The triangle formed by $C_1$, $C_2$ and $C_3$ is then an ultra-parallel $[m_1,m_2,0]$-triangle with angular invariant~$\al$,
where $\cosh(m_j/2)=r_j$ for $j=1,2$.

\myskip
Every value of the angular invariant between~$0$ and~$2\pi$ 
and hence each isometry type of $[m_1,m_2,0]$-triangles
is represented among the parametrisations of the form given above.

\myskip
Let $I_j$ denote the inversion in the chain $C_j$, $j=1,2,3$.
Let $\Ga=\<I_1,I_2,I_3\>$ be the group generated by $I_1$, $I_2$ and $I_3$
and let $\Ga'=\<I_1,I_2\>$ be the group generated by just $I_1$ and~$I_2$.

\myskip
We shall now revert from looking at reflections in the geodesics $C_1$, $C_2$ and $C_3$
and instead talk about reflections in the corresponding chains, which we denote by $C_1$, $C_2$ and $C_3$ as well.
If we look at the arrangement of the chains $C_1$, $C_2$ and $C_3$ in $\calN$,
the finite chain $C_3$ is the (Euclidean) unit circle in $\c\times\{0\}$,
whereas $C_1$ and $C_2$ are vertical lines through $r_2e^{i\theta}$ and $-r_1e^{-i\theta}$ respectively,
see Figure~\ref{fig-chains}.
Since $r_1,r_2>1$, the chains $C_1$ and $C_2$ lie outside the chain $C_3$.


\begin{figure}[h]
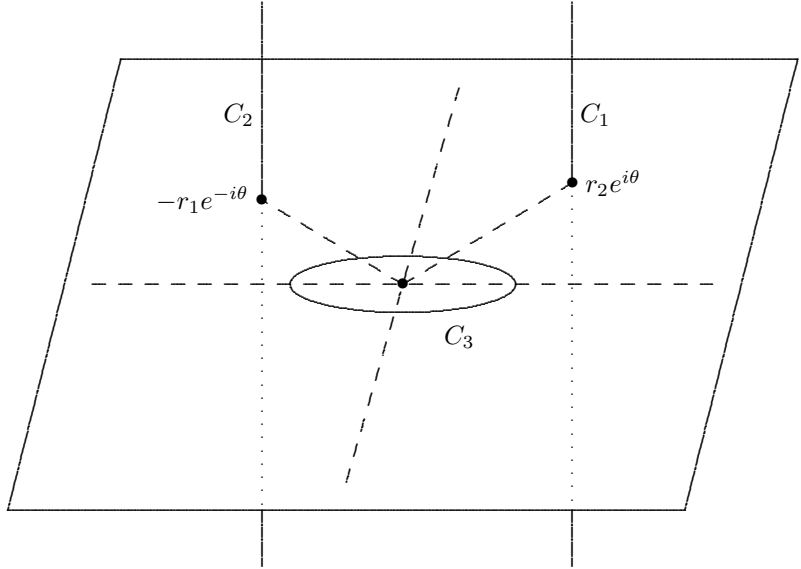

\begin{center}
\leavevmode
  \setcoordinatesystem units <0.75cm,0.75cm> point at 0 0
  \setplotarea x from -8 to 8, y from -5 to 5
  \plot -7 -4 5 -4 7 4 -5 4 -7 -4 /
  \ellipticalarc axes ratio 4:1 360 degrees from 2 0 center at 0 0
  \plot -2.5 1.5 -2.5 5 /
  \plot 3 1.8 3 5 /
  \plot -2.5 -4 -2.5 -5 /
  \plot 3 -4 3 -5 /
  \put {$C_3$} [t] <0pt,-5pt> at 1 -0.5
  \put {$C_2$} [r] <-3pt,0pt> at -2.5 3
  \put {$C_1$} [l] <3pt,0pt> at 3 3
  \put {$r_2 e^{i\theta}$} [l] <5pt,0pt> at 3 1.8
  \put {$-r_1 e^{-i\theta}$} [r] <-5pt,0pt> at -2.5 1.5
  \multiput {$\bullet$} at 0 0 -2.5 1.5 3 1.8 /
  \setdashes
  \plot 0 0 -2.5 1.5 /
  \plot 0 0 3 1.8 /
  \plot -5.5 0 5.5 0 /
  \plot -1 -3.5 1 3.5 /
  \setdots
  \plot -2.5 1.5 -2.5 -4 /
  \plot 3 1.8 3 -4 /
\end{center}
\caption{Chains $C_1$, $C_2$ and $C_3$}
\label{fig-chains}
\end{figure} 

\myskip
Recall that inversion~$I_j$ in the vertical chain~$C_j$
rotates any vertical chain as a set around~$C_j$ through~$\pi$.
Let $J_1$ and $J_2$ be the rotations of~$\c\times\{0\}$
around~$r_2e^{i\theta}$ and $-r_1e^{-i\theta}$ through~$\pi$ respectively.
Let $\La=\<J_1,J_2\>$ be the group of isometries of~$\c\times\{0\}$ generated by~$J_1$ and~$J_2$.
The rotations~$J_1$ and $J_2$ are of order~$2$,
so we can represent any element of~$\La$ as an alternating product of~$J_1$ and~$J_2$:
$$\La=\{(J_2J_1)^{\ell},J_1(J_2J_1)^{\ell}\st\,\ell\in\z\}.$$
We compute
\begin{align*}
  &(J_2J_1)^{\ell}(0)=-2\ell(r_2e^{i\theta}+r_1e^{-i\theta}),\\
  &J_1(J_2J_1)^{\ell}(0)=2r_2e^{i\theta}+2\ell(r_2e^{i\theta}+r_1e^{-i\theta}).
\end{align*}
Note that the projection $\Ga'\to\La$ given by $I_1\mapsto J_1$ and $I_2\mapsto J_2$ is injective.

\section{Discreteness Results}

\label{sec-discr-results}

\myskip\noindent
We will need the following lemma:

\begin{lem}
\label{lem1}
If $|g(0)|\ge2$ for each~$g\in\Lastar$, then the group $\Ga$ is discrete.
\end{lem}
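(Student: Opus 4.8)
The plan is to establish the compressing property for $\Ga$ and then to invoke the discreteness criterion of Schwartz and Wyss-Gallifent quoted above. The whole argument rests on recognising the inversion $I_3$ in the unit chain $C_3$ as a Cygan inversion. A direct computation from the reflection formula shows that $I_3$ interchanges the origin $0$ with $\infty$; consequently its isometric sphere is the Cygan sphere of radius $r_{I_3}=1$ centred at $I_3^{-1}(\infty)=0$, which is exactly the unit spinal sphere $U=\{(\zeta,\upsilon)\in\calN : |\zeta|^4+\upsilon^2=1\}$. By the standard isometric-sphere property, $I_3$ maps the exterior of $U$ onto its interior and the interior onto the exterior.

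With this in hand I would take $U_1$ to be the exterior of $U$ in $\calN$ (the component containing $\infty$) and set $U_2=I_3(U_1)$, the open interior of $U$. Then $U_1\cap U_2=\emptyset$, and condition~(1) of the compressing definition, $I_3(U_1)=U_2$, holds by construction. For the inner region I would take $V=\{(\zeta,\upsilon)\in\calN : |\zeta|^4+\upsilon^2>1\}$, the open exterior with the point $\infty$ deleted, so that $V\subsetneq U_1$.

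The core of the proof is condition~(2): $g(U_2)\subsetneq V$ for every $g\in\Gaprimstar$. Here I would pass to the vertical projection $\pi\colon\calN\to\c$, $(\zeta,\upsilon)\mapsto\zeta$. Every element of $\Ga'$ fixes $\infty$ and hence descends to a Euclidean isometry of $\c$; this descent is precisely the injective projection $\Ga'\to\La$, $I_j\mapsto J_j$, so that $\pi\circ g=\bar g\circ\pi$ with $\bar g\in\Lastar$ (nontrivial since the projection is injective and $g\ne\id$). As $\pi(U_2)$ is the open unit disc about $0$ and $\bar g$ is a Euclidean isometry, $\pi(g(U_2))=\bar g(\pi(U_2))$ is the open unit disc about $\bar g(0)$. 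The hypothesis $|\bar g(0)|=|g(0)|\ge2$ then pushes this disc into $\{|\zeta|>1\}$, so $g(U_2)\subset\{|\zeta|^4+\upsilon^2>1\}=V$.

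The step that will need the most care is strictness in the extremal case $|g(0)|=2$: a closed disc of radius~$1$ whose centre lies at distance~$2$ touches the closed unit disc, which would destroy the inclusion into the open exterior. This is exactly why $U_2$ must be taken to be the \emph{open} interior of $U$: its image $g(U_2)$ is then open, so $\pi(g(U_2))$ is an open disc, and any $z$ in it satisfies $|z|\ge|\bar g(0)|-|z-\bar g(0)|>2-1=1$, keeping $g(U_2)$ strictly inside $V$. Once conditions~(1) and~(2) are verified, $\Ga$ is compressing, and the criterion yields that $\Ga$ is discrete.
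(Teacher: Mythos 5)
Your proof is correct and follows essentially the same route as the paper: both verify the compressing property with $U_1$, $U_2$ the outside and inside of the unit spinal sphere~$U$, and both reduce condition~(2) to the observation that, after projecting to $\c$, elements of $\Lastar$ carry the open unit disc onto unit discs centred at distance $|g(0)|\ge2$ from the origin, hence off the closed unit disc. The only cosmetic differences are that you justify that $I_3$ swaps the inside and outside of $U$ via its isometric sphere rather than via invariance of the bisector $\mathfrak{C}$, and you take $V$ to be the whole open exterior of $U$ with $\infty$ removed, whereas the paper takes $V$ to be the union of the images $g(U_2)$ over $g\in\Gaprimstar$; both choices satisfy the compressing criterion.
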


\begin{proof}
We will use the discreteness criterion described above. 
Consider the unit spinal sphere
$$U=\{(\zeta,\upsilon)\in\calN:|\zeta|^4+\upsilon^2=1\}.$$
The inversion $I_3$ in $C_3$ is given by
$$I_3([z_1:z_2:z_3])=[z_3:-z_2:z_1]\qquad\text{for}\quad[z_1:z_2:z_3]\in\chp.$$
The inversion~$I_3$ preserves the bisector
$$\mathfrak{C}=\bigl\{[z_1:z_2:z_3]\in\chp\st|z_1|=|z_3|\}$$
and hence preserves the unit spinal sphere~$U$, which is the boundary of~$\mathfrak{C}$.
The inversion~$I_3$ interchanges the points $[0:0:1]$ and $[1:0:0]$ in~$\chp$,
which correspond to the points $(0,0)\in\c\times\r$ and infinity in~$\calN$.
Hence $I_3$ leaves $U$ invariant and switches the inside of $U$ with the outside.

\myskip
Let $U_1$ be the part of $\calN\smallsetminus U$ outside~$U$, containing $\infty$,
and let $U_2$ be the part inside~$U$, containing the origin.
Clearly we have $U_1\cap U_2=\emptyset$ and $I_3(U_1)=U_2$.
Therefore if we find a subset $V\varsubsetneq U_1 $ such that $g(U_2)\varsubsetneq V$ for all elements $g\in\Gaprimstar$,
then we have proved that $\Ga$ is compressing and hence discrete.
Let
$$W=\{(\zeta,\upsilon)\in\calN:|\zeta|=1\}$$
be the set of all vertical chains through $\zeta\in\c$ with $|\zeta|=1$.
Let
$$
  W_1=\{(\zeta,\upsilon)\in\calN:|\zeta|>1\}
  \quad\text{and}\quad
  W_2=\{(\zeta,\upsilon)\in\calN:|\zeta|<1\}.
$$
We have $U_2\subset W_2$ and so $g(U_2)\subset g(W_2)$ for all elements~$g\in\Gaprimstar$.
The set~$W_2$ is a union of vertical chains. 
Elements of~$\Ga'$ map vertical chains to vertical chains.
Therefore we can simply look at the intersection of the images of~$W_2$ with $\c\times\{0\}$.
For each~$g$ the image~$g(W_2)$ of~$W_2$ intersects $\c\times\{0\}$ in a disc.
Elements of $\Ga'$ move the intersection with $\c\times\{0\}$ 
by rotations $J_1$ and $J_2$ around $r_2e^{i\theta}$ and $-r_1e^{-i\theta}$ through~$\pi$.
The projection $\Ga'\to\La$ is injective, hence elements of $\Gaprimstar$ move the intersection with $\c\times\{0\}$ by elements of $\Lastar$.
Provided that the interior of the unit circle is mapped completely off itself under all elements in~$\Lastar$,
then the same is true for $W_2$ and hence for $U_2$ under~$\Ga'$.
We can then choose $V$ to be the union of all the images of $U_2$.
We can be sure that $V\ne U_1$ since $V$ is missing all the images of $W_2\smallsetminus U_2$.
We are therefore only left to find what is required
to be sure that the interior of the unit circle is mapped off itself by any element in~$\Lastar$.
Since the radius of a circle is preserved under rotations,
it suffices to show that the origin is moved to a distance of at least~$2$ by any element in~$\Lastar$.
This is precisely the condition of the lemma.
\end{proof}

\begin{lem}
\label{lem-*}
The condition $|g(0)|\ge2$ holds for all elements~$g\in\Lastar$ if and only if
$$a(\ell)\ge1\quad\text{for all}~\ell\in\z\setminus\{-1,0\}\quad\text{and}\quad b\ge1\qquad{(*)},$$
where
$$a(\ell)=|r_2e^{i\theta}+\ell(r_2e^{i\theta}+r_1e^{-i\theta})|,\quad b=|r_2e^{i\theta}+r_1e^{-i\theta}|.$$
\end{lem}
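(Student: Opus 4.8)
The plan is to read off the non-identity elements of $\La$ from the explicit description $\La=\{(J_2J_1)^{\ell},\,J_1(J_2J_1)^{\ell}\st\ell\in\z\}$ established in Section~\ref{sec-param}, and to evaluate each at the origin using the two formulas computed there. Since $J_1,J_2$ have order two, the even words $(J_2J_1)^{\ell}$ are the rotations, with $(J_2J_1)^0=\id$ the only identity among them, while the odd words $J_1(J_2J_1)^{\ell}$ are reflections and hence all non-trivial (a reduced alternating word of odd length is never the identity). Thus $\Lastar$ consists of $(J_2J_1)^{\ell}$ for $\ell\neq0$ together with $J_1(J_2J_1)^{\ell}$ for every $\ell\in\z$, and the condition $|g(0)|\ge2$ can be checked family by family.

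For the even words, $(J_2J_1)^{\ell}(0)=-2\ell(r_2e^{i\theta}+r_1e^{-i\theta})$ gives $|(J_2J_1)^{\ell}(0)|=2|\ell|\,b$. Demanding this be at least $2$ for every $\ell\neq0$ is the same as $|\ell|\,b\ge1$ for all $\ell\neq0$; as the left-hand side is minimised at $|\ell|=1$, this collapses to the single inequality $b\ge1$. For the odd words, $J_1(J_2J_1)^{\ell}(0)=2r_2e^{i\theta}+2\ell(r_2e^{i\theta}+r_1e^{-i\theta})$ gives $|J_1(J_2J_1)^{\ell}(0)|=2\,a(\ell)$, so here $|g(0)|\ge2$ is precisely $a(\ell)\ge1$. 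Hence ``$|g(0)|\ge2$ for all $g\in\Lastar$'' is equivalent to ``$b\ge1$ together with $a(\ell)\ge1$ for every $\ell\in\z$'', and because each translation step is an equivalence the resulting biconditional holds in both directions.

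The only discrepancy between this and the stated condition~$(*)$ is the two indices $\ell=0$ and $\ell=-1$, and the point I would emphasise is that these are automatic. Direct substitution yields $a(0)=|r_2e^{i\theta}|=r_2$ and $a(-1)=|r_2e^{i\theta}-(r_2e^{i\theta}+r_1e^{-i\theta})|=|-r_1e^{-i\theta}|=r_1$. Since $r_j=\cosh(m_j/2)\ge1$ for $j=1,2$, both $a(0)\ge1$ and $a(-1)\ge1$ hold unconditionally, so these two indices may be deleted from the requirement ``$a(\ell)\ge1$ for all $\ell\in\z$'' without loss. What remains is exactly $a(\ell)\ge1$ for all $\ell\in\z\setminus\{-1,0\}$ together with $b\ge1$, which is~$(*)$. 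The main thing to get right is this bookkeeping: recognising which indices are forced by $r_1,r_2\ge1$, and checking that the even-word family reduces cleanly to the lone inequality $b\ge1$; everything else is substitution into formulas already in hand.
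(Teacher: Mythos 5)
Your proposal is correct and follows essentially the same route as the paper's own proof: evaluate the two families $(J_2J_1)^{\ell}$ and $J_1(J_2J_1)^{\ell}$ at the origin, reduce the even-word condition to $b\ge1$, and discard the indices $\ell=-1,0$ since $a(-1)=r_1\ge1$ and $a(0)=r_2\ge1$ automatically. The only addition is your explicit remark that the odd words are half-turns and hence never the identity, a point the paper leaves implicit.
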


\begin{proof}
Recall that
$$\La=\{(J_2J_1)^{\ell},J_1(J_2J_1)^{\ell}\st\,\ell\in\z\}$$
and
\begin{align*}
  &(J_2J_1)^{\ell}(0)=-2\ell(r_2e^{i\theta}+r_1e^{-i\theta}),\\
  &J_1(J_2J_1)^{\ell}(0)=2r_2e^{i\theta}+2\ell(r_2e^{i\theta}+r_1e^{-i\theta}).
\end{align*}
We need $|g(0)|\ge2$ for all $g\in\Lastar$, i.e.\
\begin{align*}
  a(\ell)&=|r_2e^{i\theta}+\ell(r_2e^{i\theta}+r_1e^{-i\theta})|\ge1\quad\text{for all}~\ell\in\z,\\
  |\ell|\cdot b&=|\ell(r_2e^{i\theta}+r_1e^{-i\theta})|\ge1\quad\text{for all}~\ell\in\z\setminus\{0\}.
\end{align*}
Note that it is sufficient to check the inequality $a(\ell)\ge 1$ for~$\ell\in\z\setminus\{-1,0\}$
as it is always satisfied for $a(-1)=r_1$ and $a(0)=r_2$.
Also note that it is sufficient to only check that $|\ell|\cdot b\ge 1$ for~$\ell=1$
as $b\ge1$ implies that the inequality holds for all~$\ell\in\z\setminus\{0\}$.
\end{proof}

\myskip
We will start with a rough estimate on~$a(\ell)$ and~$b$ obtained by taking $|\Re(z)|$ as a lower bound for $|z|$:

\begin{lem}
\label{lem3}
$$\sin\left(\frac{\al}{2}\right)\ge\frac{1}{r_1+r_2}$$
implies conditions~$(*)$.
\end{lem}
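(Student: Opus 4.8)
The plan is to reduce everything to a single inequality on $\cos\theta$ and then bound the moduli $a(\ell)$ and $b$ from below by their real parts, as the remark preceding the lemma suggests. First I would record the trigonometric identity linking the hypothesis to~$\theta$: since $\theta=(\pi-\al)/2$ we have $\al/2=\pi/2-\theta$, hence $\sin(\al/2)=\cos\theta$. Thus the hypothesis $\sin(\al/2)\ge 1/(r_1+r_2)$ is precisely $(r_1+r_2)\cos\theta\ge 1$. Because $\theta\in(-\pi/2,\pi/2)$ we have $\cos\theta>0$ throughout, which is exactly what makes the real-part estimates effective.

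Next I would dispose of the condition $b\ge 1$. Taking real parts, $\Re(r_2e^{i\theta}+r_1e^{-i\theta})=(r_1+r_2)\cos\theta$, so that
$$b=|r_2e^{i\theta}+r_1e^{-i\theta}|\ge(r_1+r_2)\cos\theta\ge 1,$$
where the last step is the reformulated hypothesis. This settles the second half of~$(*)$.

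For the inequalities $a(\ell)\ge 1$ I would again pass to the real part. A direct computation gives $\Re\bigl(r_2e^{i\theta}+\ell(r_2e^{i\theta}+r_1e^{-i\theta})\bigr)=\cos\theta\,(r_2+\ell(r_1+r_2))$, so that
$$a(\ell)\ge\cos\theta\,\bigl|r_2+\ell(r_1+r_2)\bigr|.$$
It therefore suffices to prove the integer estimate $|r_2+\ell(r_1+r_2)|\ge r_1+r_2$ for every $\ell\in\z\setminus\{-1,0\}$, since combined with the hypothesis this gives $a(\ell)\ge(r_1+r_2)\cos\theta\ge 1$. The estimate follows from a short case analysis using $r_1,r_2>0$: for $\ell\ge 1$ the quantity $r_2+\ell(r_1+r_2)$ is positive and increasing in~$\ell$, hence minimised at $\ell=1$, where it equals $r_1+2r_2\ge r_1+r_2$; for $\ell\le -2$ the quantity is negative with modulus $|\ell|(r_1+r_2)-r_2$, which is minimised at $\ell=-2$, where it equals $2r_1+r_2\ge r_1+r_2$.

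The only genuinely delicate point is the bookkeeping in this last case split, and it also explains why $\ell=-1,0$ must be excluded: at those two values $|r_2+\ell(r_1+r_2)|$ equals $r_1$ and $r_2$ respectively, each of which can be strictly smaller than $r_1+r_2$, so the crude real-part bound fails precisely there. However, Lemma~\ref{lem-*} has already removed these two exponents from consideration, since $a(-1)=r_1\ge 1$ and $a(0)=r_2\ge 1$ hold automatically. Hence no further work is needed and the implication $(*)$ is established.
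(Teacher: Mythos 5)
Your proof is correct and follows exactly the paper's own argument: reduce the hypothesis to $(r_1+r_2)\cos\theta\ge1$ via $\sin(\al/2)=\cos\theta$, then bound $b$ and $a(\ell)$ below by the absolute values of their real parts, using $|r_2+\ell(r_1+r_2)|=|\ell r_1+(\ell+1)r_2|\ge r_1+r_2$ for $\ell\in\z\setminus\{-1,0\}$. The only difference is that you spell out the case analysis behind that last integer inequality, which the paper states without detail, so there is nothing to add.
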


\begin{proof}
Recall that $\sin(\al/2)=\cos(\theta)$.
Note that $\theta\in(-\pi/2,\pi/2)$ and hence $\cos\theta>0$.
Suppose $\cos\theta\ge\frac{1}{r_1+r_2}$. 
Then
\begin{align*}
  b&=|r_2e^{i\theta}+r_1e^{-i\theta}|
  \ge|\Re(r_2e^{i\theta}+r_1e^{-i\theta})|
  =(r_1+r_2)\cdot\cos\theta
  \ge1,\\
  a(\ell)&=|r_2e^{i\theta}+\ell(r_2e^{i\theta}+r_1e^{-i\theta})|
  \ge|\Re(r_2e^{i\theta}+\ell(r_2e^{i\theta}+r_1e^{-i\theta}))|\\
  &=|r_2+\ell(r_1+r_2)|\cdot\cos\theta
  =|\ell r_1+(\ell+1)r_2|\cdot\cos\theta\\
  &\ge(r_1+r_2)\cdot\cos\theta
  \ge1\quad\text{for}~\ell\in\z\backslash\{-1,0\}.\qedhere
\end{align*}
\end{proof}

\myskip\noindent
Combining the results of Lemmas~\ref{lem1}, \ref{lem-*} and~\ref{lem3} we obtain Proposition~\ref{conditions1}.
We will now calculate~$a(\ell)$ and $b$ to obtain more refined estimates:

\begin{lem}
\label{lem4new}
Conditions~$(*)$ are equivalent to
\begin{align*}
  &4r_1r_2\sin^2\left(\frac{\al}{2}\right)\ge\fb,\\
  &4r_1r_2\sin^2\left(\frac{\al}{2}\right)\ge\fa(\ell)\quad\text{for all}~\ell\in\z\setminus\{-1,0\},
\end{align*}
where
$$\fa(\ell)=\frac{1-(\ell r_1-(\ell+1)r_2)^2}{\ell(\ell+1)},\quad \fb=1-(r_1-r_2)^2.$$
\end{lem}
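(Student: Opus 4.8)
The plan is to reduce both inequalities in $(*)$ to statements about the squared moduli $a(\ell)^2$ and $b^2$, which are polynomials in $\cos^2\theta$, and then to use the identity $\sin(\al/2)=\cos\theta$ recorded in the proof of Lemma~\ref{lem3} to rewrite everything in terms of $\sin^2(\al/2)$. Since each reduction is an equivalence, stringing them together will give the claimed ``if and only if''.

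First I would expand $b^2$. Writing $r_2e^{i\theta}+r_1e^{-i\theta}=(r_1+r_2)\cos\theta+i(r_2-r_1)\sin\theta$, using $\sin^2\theta=1-\cos^2\theta$, and invoking the identity $(r_1+r_2)^2-(r_1-r_2)^2=4r_1r_2$, one obtains
$$b^2=(r_1-r_2)^2+4r_1r_2\cos^2\theta.$$
Because $b\ge0$, the condition $b\ge1$ is equivalent to $b^2\ge1$, which rearranges to $4r_1r_2\cos^2\theta\ge1-(r_1-r_2)^2=\fb$; substituting $\cos\theta=\sin(\al/2)$ yields the first claimed inequality.

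Next I would treat $a(\ell)$ in the same way. Writing $r_2e^{i\theta}+\ell(r_2e^{i\theta}+r_1e^{-i\theta})=(\ell+1)r_2e^{i\theta}+\ell r_1e^{-i\theta}$ and applying the analogous decomposition with $A=(\ell+1)r_2$ and $B=\ell r_1$ (so that $A-B=-(\ell r_1-(\ell+1)r_2)$ and $4AB=4\ell(\ell+1)r_1r_2$) gives
$$a(\ell)^2=(\ell r_1-(\ell+1)r_2)^2+4\ell(\ell+1)r_1r_2\cos^2\theta.$$
As before $a(\ell)\ge1$ is equivalent to $a(\ell)^2\ge1$, i.e.\ to $4\ell(\ell+1)r_1r_2\cos^2\theta\ge1-(\ell r_1-(\ell+1)r_2)^2$.

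The one point that needs care — and the only place the restriction $\ell\in\z\setminus\{-1,0\}$ is actually used — is dividing this last inequality by $\ell(\ell+1)$ without reversing its direction. For $\ell\ge1$ both factors are positive, while for $\ell\le-2$ both are negative, so $\ell(\ell+1)>0$ on the entire index set and the division preserves the inequality, producing $4r_1r_2\cos^2\theta\ge\fa(\ell)$. The excluded values $\ell=-1,0$ give $\ell(\ell+1)=0$ and correspond precisely to the cases $a(-1)=r_1\ge1$ and $a(0)=r_2\ge1$ already set aside in Lemma~\ref{lem-*}, so nothing is lost. Substituting $\cos\theta=\sin(\al/2)$ and tracking that every step above is reversible completes the proof.
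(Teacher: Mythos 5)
Your proof is correct and follows essentially the same route as the paper's: compute $b^2$ and $a(\ell)^2$ as $(r_1-r_2)^2+4r_1r_2\cos^2\theta$ and $(\ell r_1-(\ell+1)r_2)^2+4\ell(\ell+1)r_1r_2\cos^2\theta$ respectively, rearrange in $\cos^2\theta$, and substitute $\cos\theta=\sin(\al/2)$; the only cosmetic difference is that you expand the moduli via real and imaginary parts where the paper uses the cross-term $2r_1r_2\cos(2\theta)$. Your explicit check that $\ell(\ell+1)>0$ on $\z\setminus\{-1,0\}$, so the division does not reverse the inequality, is exactly the point the paper compresses into the word ``rearranging,'' and it is good that you made it explicit.
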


\begin{proof}
Conditions $(*)$ state that $b\ge1$ and $a(\ell)\ge 1$ for all $\ell\in\z\setminus\{-1,0\}$, where
\begin{align*}
  b^2&=|r_2e^{i\theta}+r_1e^{-i\theta}|^2 \\
  &=r_2^2+2r_1r_2\cos(2\theta)+r_1^2 \\
  &=(r_1-r_2)^2+4r_1r_2\cos^2(\theta),\\
  a^2(\ell)
  &=|r_2e^{i\theta}+\ell(r_2e^{i\theta}+r_1e^{-i\theta})|^2 \\
  &=(\ell+1)^2r_2^2+2\ell(\ell+1)r_1r_2\cos(2\theta)+\ell^2r_1^2 \\
  &=(\ell r_1-(\ell+1)r_2)^2+4\ell(\ell+1)r_1r_2\cos^2(\theta).
  \end{align*}
Rearranging these expressions to give an inequality in $\cos^2(\theta)$ and using $\cos(\theta)=\sin(\al/2)$ gives the result.
\end{proof}

\myskip\noindent
We will now discuss some properties of~$\fa$ as a function of~$\ell$:

\begin{lem}
\label{lem-about-fA}
Consider the function $$\fa(\ell)=\frac{1-(\ell r_1-(\ell+1)r_2)^2}{\ell(\ell+1)}\quad\text{for}~\ell\in\z\setminus\{-1,0\}.$$
\begin{enumerate}[(a)]
\item
Suppose that
$$r_1^2-1\ge\frac{k+2}{k}(r_2^2-1)\quad\text{for some integer}~k\ge1,$$
then $\fa(\ell_1)\ge\fa(\ell_2)$ for all integers $\ell_1,\ell_2$ with either $k\le\ell_1<\ell_2$ or $\ell_2\le -2$, $\ell_1\ge k/2$.
\item
Suppose that
$$r_1^2-1\le\frac{k+1}{k-1}(r_2^2-1)\quad\text{for some integer}~k\ge2,$$
then $\fa(\ell_1)\ge\fa(\ell_2)$ for all integers $\ell_1,\ell_2$ with $1\le\ell_2<\ell_1\le k$. 
\end{enumerate}
\end{lem}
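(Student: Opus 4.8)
The plan is to first recast $\fa$ in a form that exposes its dependence on $\ell$, after which both parts reduce to elementary sign computations. Expanding $(\ell r_1-(\ell+1)r_2)^2$, dividing through by $\ell(\ell+1)$, and regrouping the constant, $1/\ell$ and $1/(\ell+1)$ contributions, I expect to obtain the identity
$$\fa(\ell)=-(r_1-r_2)^2-\frac{r_2^2-1}{\ell}+\frac{r_1^2-1}{\ell+1}.$$
Writing $P=r_1^2-1\ge0$, $Q=r_2^2-1\ge0$ and $D=(r_1-r_2)^2$, this reads $\fa(\ell)=-D-Q/\ell+P/(\ell+1)$, from which the first difference follows at once:
$$\fa(\ell+1)-\fa(\ell)=\frac{Q}{\ell(\ell+1)}-\frac{P}{(\ell+1)(\ell+2)}=\frac{1}{\ell+1}\left(\frac{Q}{\ell}-\frac{P}{\ell+2}\right).$$
Deriving this identity is the one place requiring genuine care; everything else is bookkeeping.

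Both the first range of (a), where $k\le\ell_1<\ell_2$, and all of (b) are monotonicity statements on consecutive integers, so I would establish them by controlling the sign of the difference above and telescoping. For (b), on $1\le\ell\le k-1$ the prefactor $1/(\ell+1)$ is positive and $\frac{Q}{\ell}-\frac{P}{\ell+2}\ge0$ is equivalent to $P\le\frac{\ell+2}{\ell}Q$; since $\frac{\ell+2}{\ell}$ decreases in $\ell$ with minimum $\frac{k+1}{k-1}$ at $\ell=k-1$, the hypothesis $P\le\frac{k+1}{k-1}Q$ gives $\fa(\ell+1)\ge\fa(\ell)$ throughout, whence $\fa(\ell_1)\ge\fa(\ell_2)$ for $1\le\ell_2<\ell_1\le k$. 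Symmetrically, for the first range of (a) the difference is $\le0$ precisely when $P\ge\frac{\ell+2}{\ell}Q$, and for $\ell\ge k$ one has $\frac{\ell+2}{\ell}\le\frac{k+2}{k}$, so the hypothesis $P\ge\frac{k+2}{k}Q$ makes $\fa$ non-increasing on $\{k,k+1,\dots\}$, giving the claim after telescoping.

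The remaining range of (a), with $\ell_2\le-2$ and $\ell_1\ge k/2$, is the genuinely different case: here $\ell_1$ and $\ell_2$ sit on opposite branches of $\fa$, separated by the poles at $\ell=0,-1$, so direct telescoping is unavailable. The idea I would use is to insert the value $-D$ as a separator. On one side, for $\ell_2=-n$ with $n\ge2$ the identity gives $\fa(\ell_2)=-D+Q/n-P/(n-1)$, which is strictly less than $-D$ because $P\ge Q$ (a consequence of the hypothesis, since $\frac{k+2}{k}\ge1$) and $\frac{1}{n-1}>\frac{1}{n}$. On the other side, $\fa(\ell_1)\ge-D$ is equivalent to $P/(\ell_1+1)\ge Q/\ell_1$, i.e.\ to $\ell_1\ge\frac{Q}{P-Q}$; and the hypothesis rearranges exactly to $\frac{Q}{P-Q}\le\frac{k}{2}$, so that $\ell_1\ge k/2$ forces $\fa(\ell_1)\ge-D>\fa(\ell_2)$. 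The degenerate possibility $P=Q$ occurs only when $P=Q=0$, where every value of $\fa$ vanishes and all inequalities are trivial.

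The main obstacle I anticipate is twofold: obtaining the clean reformulation of $\fa$ correctly (a short but error-prone computation), and recognising in the cross-pole case of (a) that the constant $-D=-(r_1-r_2)^2$ is the right dividing value, converting an apparently awkward comparison into two one-sided threshold inequalities that both follow directly from the single hypothesis.
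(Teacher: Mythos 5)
Your proposal is correct, and it starts from exactly the identity the paper uses, namely the partial-fraction rewriting $\fa(\ell)=\frac{r_1^2-1}{\ell+1}-\frac{r_2^2-1}{\ell}-(r_1-r_2)^2$; the difference is in how the comparisons are organised afterwards. The paper computes and factors the general two-point difference
$$\fa(\ell_1)-\fa(\ell_2)=(\ell_1-\ell_2)\left(\frac{r_2^2-1}{\ell_1\ell_2}-\frac{r_1^2-1}{(\ell_1+1)(\ell_2+1)}\right),$$
pulls out the factor $r_2^2-1$ (which forces a separate, easy, case $r_2=1$), and reads off the sign in each regime from one formula; in particular the cross-pole case $\ell_2\le-2$, $\ell_1\ge k/2$ is handled by the same factored expression as the monotone ranges, via the bound $\frac{1}{\ell_1}+\frac{1}{\ell_2}+\frac{1}{\ell_1\ell_2}\le\frac{1}{\ell_1}\le\frac{2}{k}$. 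You instead telescope consecutive differences $\fa(\ell+1)-\fa(\ell)=\frac{1}{\ell+1}\bigl(\frac{r_2^2-1}{\ell}-\frac{r_1^2-1}{\ell+2}\bigr)$ for the same-branch claims and insert the separating value $-(r_1-r_2)^2$ for the cross-branch comparison. Both arguments are sound, and yours buys two small advantages: the comparisons $P\lessgtr\frac{\ell+2}{\ell}Q$ never require dividing by $r_2^2-1$, so the paper's case split $r_2=1$ versus $r_2\ne1$ disappears, and the separator even yields a strict inequality across the poles. The cost is that you need two mechanisms instead of one, and the separator step divides by $P-Q$, so you must (and correctly do) note that under hypothesis (a) the degeneracy $P=Q$ forces $P=Q=0$, where $\fa\equiv0$. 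Your remaining steps all check out, including the observation that hypothesis (a) is exactly the threshold statement $\frac{Q}{P-Q}\le\frac{k}{2}$.
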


\begin{proof}
The function $\fa$ can be rewritten as
$$\fa(\ell)=\frac{1-(\ell r_1-(\ell+1)r_2)^2}{\ell(\ell+1)}=\frac{r_1^2-1}{\ell+1}-\frac{r_2^2-1}{\ell}-(r_1-r_2)^2.$$
For any $\ell_1,\ell_2\in\z\setminus\{-1,0\}$ we have
$$\fa(\ell_1)-\fa(\ell_2)=(\ell_1-\ell_2)\left(\frac{r_2^2-1}{\ell_1\ell_2}-\frac{r_1^2-1}{(\ell_1+1)(\ell_2+1)}\right).$$

\myskip\noindent
Suppose that $r_2=1$, then
$$\fa(\ell_1)-\fa(\ell_2)=-\frac{(\ell_1-\ell_2)(r_1^2-1)}{(\ell_1+1)(\ell_2+1)}.$$
In this case the condition
$$r_1^2-1\ge3(r_2^2-1)$$
is satisfied.
This corresponds to part~(a) with~$k=1$. 
Then for $1\le\ell_1<\ell_2$ we have $\fa(\ell_1)-\fa(\ell_2)\ge0$ since
$$\ell_1-\ell_2<0,\quad \ell_1+1,\ell_2+1>0.$$
For $\ell_2\le-2$, $\ell_1\ge1$ we have $\fa(\ell_1)-\fa(\ell_2)\ge0$ since
$$\ell_1-\ell_2,\ell_1+1>0,\quad\ell_2+1<0.$$

\myskip\noindent
Now suppose that $r_2\ne1$, then we can rewrite $\fa(\ell_1)-\fa(\ell_2)$ as
\begin{align*}
  \fa(\ell_1)-\fa(\ell_2)
  &=\frac{(\ell_1-\ell_2)(r_2^2-1)}{(\ell_1+1)(\ell_2+1)}
       \left(\frac{1}{\ell_1}+\frac{1}{\ell_2}+\frac{1}{\ell_1\ell_2}-\frac{r_1^2-r_2^2}{r_2^2-1}\right).
\end{align*}
Suppose, for some integer $k\ge 1$, that
$$r_1^2-1\ge\frac{k+2}{k}(r_2^2-1),\quad\text{i.e.}\quad\frac{r_1^2-r_2^2}{r_2^2-1}\ge\frac{2}{k}.$$
Then for $1\le k\le\ell_1<\ell_2$ we have $\fa(\ell_1)-\fa(\ell_2)\ge0$ since $\ell_1-\ell_2<0$, $\ell_1+1,\ell_2+1>0$ and
$$\frac{1}{\ell_1}+\frac{1}{\ell_2}+\frac{1}{\ell_1\ell_2}-\frac{r_1^2-r_2^2}{r_2^2-1}\le\frac{1}{k}+\frac{1}{k+1}+\frac{1}{k(k+1)}-\frac{2}{k}=0.$$
For $\ell_2\le -2$, $\ell_1\ge k/2$ we have $\fa(\ell_1)-\fa(\ell_2)\ge0$ since $\ell_1-\ell_2,\ell_1+1>0$, $\ell_2+1<0$ and 
$$\frac{1}{\ell_1}+\frac{1}{\ell_2}+\frac{1}{\ell_1\ell_2}-\frac{r_1^2-r_2^2}{r_2^2-1}\le\frac{2}{k}-\frac{2}{k}\le0.$$
Suppose, for some integer $k\ge 2$, that
$$r_1^2-1\le\frac{k+1}{k-1}(r_2^2-1),\quad\text{i.e.}\quad\frac{r_1^2-r_2^2}{r_2^2-1}\le\frac{2}{k-1}.$$
Then for $1\le\ell_2<\ell_1\le k$ we have $\fa(\ell_1)-\fa(\ell_2)\ge0$ since $\ell_1-\ell_2,\ell_1+1,\ell_2+1>0$ and
$$
  \frac{1}{\ell_1}+\frac{1}{\ell_2}+\frac{1}{\ell_1\ell_2}-\frac{r_1^2-r_2^2}{r_2^2-1}
  \ge\frac{1}{k}+\frac{1}{k-1}+\frac{1}{k(k-1)}-\frac{2}{k-1}=0.\qedhere
$$
\end{proof}

\myskip\noindent
We will now discuss for which~$\ell$, depending on~$r_1$ and~$r_2$, does the inequality 
$$4r_1r_2\sin^2\left(\frac{\al}{2}\right)\ge\fa(\ell)$$
in Lemma~\ref{lem4new} give the strongest estimate on $\sin^2(\al/2)$:

\begin{lem}
\label{lem5new}
Suppose $m_1\ge m_2\ge0$.
Conditions~$(*)$ hold if one of the following conditions is satisfied:
\begin{enumerate}
\item[(i)] 
$3(r_2^2-1)\le r_1^2-1$ and $4r_1r_2\sin^2(\al/2)\ge\max\{\fb,\fa(1)\}$.
\item[(ii)]
For some integer~$k\ge 2$
$$\frac{k+2}{k}(r_2^2-1)\le r_1^2-1\le\frac{k+1}{k-1}(r_2^2-1)$$ 
and
$$4r_1r_2\sin^2(\al/2)\ge\max\{\fb,\fa(k)\}.$$
\end{enumerate}
\end{lem}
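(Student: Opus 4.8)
The plan is to reduce everything to Lemma~\ref{lem4new} and then to argue that, under either hypothesis, a single value of the index~$\ell$ controls the whole family of inequalities appearing there. By Lemma~\ref{lem4new}, Conditions~$(*)$ are equivalent to $4r_1r_2\sin^2(\al/2)\ge\fb$ together with $4r_1r_2\sin^2(\al/2)\ge\fa(\ell)$ for every $\ell\in\z\setminus\{-1,0\}$. Since the inequality involving~$\fb$ appears verbatim in the hypotheses of both cases, it suffices to prove that the prescribed index realises the maximum of~$\fa$ over all admissible~$\ell$; that is, $\fa(1)\ge\fa(\ell)$ in case~(i) and $\fa(k)\ge\fa(\ell)$ in case~(ii), for all $\ell\in\z\setminus\{-1,0\}$. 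Given such a maximality statement, the single hypothesis $4r_1r_2\sin^2(\al/2)\ge\max\{\fb,\fa(k)\}$ immediately dominates the entire family, and Conditions~$(*)$ follow.

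The maximality itself I would read off directly from Lemma~\ref{lem-about-fA}. In case~(i) the hypothesis $3(r_2^2-1)\le r_1^2-1$ is exactly the inequality $r_1^2-1\ge\frac{k+2}{k}(r_2^2-1)$ with~$k=1$, so part~(a) applies; specialising $\ell_1=1$ in its two conclusions yields $\fa(1)\ge\fa(\ell)$ for all $\ell\ge2$ and for all $\ell\le-2$, which together with the trivial equality at $\ell=1$ exhausts $\z\setminus\{-1,0\}$. In case~(ii) the two-sided hypothesis splits precisely into the separate hypotheses of the two parts of Lemma~\ref{lem-about-fA}: the lower bound $\frac{k+2}{k}(r_2^2-1)\le r_1^2-1$ drives part~(a) and the upper bound $r_1^2-1\le\frac{k+1}{k-1}(r_2^2-1)$ drives part~(b). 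Specialising $\ell_1=k$, part~(b) delivers $\fa(k)\ge\fa(\ell)$ for $1\le\ell\le k-1$, while part~(a) delivers $\fa(k)\ge\fa(\ell)$ for $\ell\ge k+1$ from its first conclusion and for $\ell\le-2$ from its second.

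The only point requiring care is checking that these index ranges genuinely leave no value of~$\ell$ uncovered, so that $\fa(k)$ really is the maximum over all of $\z\setminus\{-1,0\}$. In case~(ii) the ranges $\{1,\dots,k-1\}$, $\{k\}$, $\{k+1,k+2,\dots\}$ and $\{-2,-3,\dots\}$ must be verified to partition $\z\setminus\{-1,0\}$; in particular one should note that the second conclusion of part~(a) reaches the negative indices because its constraint is $\ell_1\ge k/2$, which is automatically satisfied by $\ell_1=k$, rather than the more restrictive $\ell_1\ge k$. This bookkeeping is the main (and only mildly delicate) step; everything else is an immediate substitution back into Lemma~\ref{lem4new}.
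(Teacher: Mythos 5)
Your proposal is correct and follows essentially the same route as the paper's proof: reduce to Lemma~\ref{lem4new} and then apply Lemma~\ref{lem-about-fA} with $\ell_1=k$ (resp.\ $\ell_1=1$) to see that $\fa(k)$ dominates $\fa(\ell)$ over all of $\z\setminus\{-1,0\}$. Your version merely spells out the bookkeeping of which conclusions of parts~(a) and~(b) cover which index ranges, which the paper compresses into a single sentence.
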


\begin{proof}
Note that part (i) is identical to part (ii) with $k=1$ except that there is no upper bound on $r_1^2-1$.
Setting $\ell_1=k$ in Lemma~\ref{lem-about-fA} we obtain that $\fa(k)\ge\fa(\ell)$ for all $\ell\in\z\setminus\{0,\,-1\}$.
This means that our hypothesis $4r_1r_2\sin^2(\al/2)\ge\fa(k)$ implies $4r_1r_2\sin^2(\al/2)\ge\fa(\ell)$ for all $\ell\in\z\setminus\{0,\,-1\}$.
This proves the result.
\end{proof}

\myskip\noindent
It remains to decide which of~$\fb$ and~$\fa(k)$, depending on~$r_1$ and~$r_2$,
is the stronger estimate on $4r_1r_2\sin^2(\al/2)$:

\begin{lem}
\label{lem6new}
Suppose $m_1\ge m_2\ge0$.
Conditions~$(*)$ hold if one of the following conditions is satisfied:
\begin{enumerate}[(a)]
\item
$$\max\left\{2r_2^2,\ 3(r_2^2-1)\right\}\le r_1^2-1\quad\text{and}\quad 4r_1r_2\sin^2(\al/2)\ge\fa(1).$$
\item
For some integer~$k\ge2$
$$\max\left\{\frac{k+1}{k}(r_2^2-1)+(k+1),\frac{k+2}{k}(r_2^2-1)\right\}\le r_1^2-1\le \frac{k+1}{k-1}(r_2^2-1)$$
and
$$4r_1r_2\sin^2(\al/2)\ge\fa(k).$$
\item
For all integers~$\ell\ge1$
$$r_1^2-1\le \frac{\ell+1}{\ell}(r_2^2-1)+(\ell+1)\quad\text{and}\quad 4r_1r_2\sin^2(\al/2)\ge\fb.$$
\end{enumerate}
\end{lem}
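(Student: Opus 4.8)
The plan is to use Lemma~\ref{lem5new} to collapse the infinite family of inequalities in Lemma~\ref{lem4new} down to a single comparison $4r_1r_2\sin^2(\al/2)\ge\max\{\fb,\fa(k)\}$ for the appropriate index~$k$, and then to decide which of $\fb$ and $\fa(k)$ is the larger. The whole argument rests on the alternative expression for $\fa$ recorded in the proof of Lemma~\ref{lem-about-fA}, namely $\fa(\ell)=\frac{r_1^2-1}{\ell+1}-\frac{r_2^2-1}{\ell}-(r_1-r_2)^2$, from which one reads off at once that $\fa(\ell)-\fb=\frac{r_1^2-1}{\ell+1}-\frac{r_2^2-1}{\ell}-1$ for every $\ell\in\z\setminus\{-1,0\}$.

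For parts~(a) and~(b) I would first invoke Lemma~\ref{lem5new}: in part~(b) the two-sided ratio condition $\frac{k+2}{k}(r_2^2-1)\le r_1^2-1\le\frac{k+1}{k-1}(r_2^2-1)$ is exactly hypothesis~(ii) of that lemma, while in part~(a) the condition $3(r_2^2-1)\le r_1^2-1$ is hypothesis~(i) (the $k=1$ case with no upper bound). Either way, conditions~$(*)$ hold as soon as $4r_1r_2\sin^2(\al/2)\ge\max\{\fb,\fa(k)\}$. It then remains to show that under the remaining hypothesis this maximum equals $\fa(k)$, i.e.\ that $\fa(k)\ge\fb$. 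By the displayed identity, $\fa(k)\ge\fb$ is equivalent to $\frac{r_1^2-1}{k+1}\ge\frac{r_2^2-1}{k}+1$, that is to $r_1^2-1\ge\frac{k+1}{k}(r_2^2-1)+(k+1)$, which is precisely the extra lower bound imposed in part~(b), and for $k=1$ reduces to the bound $2r_2^2\le r_1^2-1$ of part~(a). Hence $\max\{\fb,\fa(k)\}=\fa(k)$ and the stated hypothesis $4r_1r_2\sin^2(\al/2)\ge\fa(k)$ suffices.

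For part~(c) I would instead prove the reverse domination $\fb\ge\fa(\ell)$ for \emph{all} $\ell\in\z\setminus\{-1,0\}$, so that the single inequality $4r_1r_2\sin^2(\al/2)\ge\fb$ forces every inequality of Lemma~\ref{lem4new}. For $\ell\ge1$ the identity above gives $\fb\ge\fa(\ell)$ precisely when $r_1^2-1\le\frac{\ell+1}{\ell}(r_2^2-1)+(\ell+1)$, which is the hypothesis. For $\ell\le-2$, writing $\ell=-m$ with $m\ge2$, the identity reads $\fa(-m)-\fb=-\frac{r_1^2-1}{m-1}+\frac{r_2^2-1}{m}-1$; since $m_1\ge m_2$ gives $r_1\ge r_2$ and hence $r_2^2-1\le r_1^2-1$, while $\frac{1}{m}\le\frac{1}{m-1}$, one has $\frac{r_2^2-1}{m}\le\frac{r_1^2-1}{m-1}$, so $\fa(-m)-\fb\le-1<0$. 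Thus $\fb\ge\fa(\ell)$ holds automatically for negative~$\ell$ with no further assumption, and combining the two ranges completes part~(c).

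The argument is essentially a sign analysis, so no single step is a serious obstacle; the only point requiring care is the negative-$\ell$ range in part~(c), where one must exploit the ordering $r_1\ge r_2$ (equivalently $m_1\ge m_2$) rather than any of the explicit ratio hypotheses to dispose of those terms. The remaining bookkeeping is just matching the algebraic threshold $\fa(k)=\fb$ against the numerical bounds stated in each case.
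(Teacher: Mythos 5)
Your proof is correct, and it follows exactly the route the paper intends: the paper states this lemma without proof, immediately after Lemma~\ref{lem5new} and the remark that ``it remains to decide which of $\fb$ and $\fa(k)$ is the stronger estimate,'' and your argument supplies precisely that — invoke Lemma~\ref{lem5new} in parts (a) and (b) and show via the identity $\fa(\ell)-\fb=\frac{r_1^2-1}{\ell+1}-\frac{r_2^2-1}{\ell}-1$ that the extra lower bound on $r_1^2-1$ is exactly the condition $\fa(k)\ge\fb$. Your direct domination argument $\fb\ge\fa(\ell)$ for part (c) is also the right move (and necessary), since Lemma~\ref{lem5new}'s ratio hypotheses can fail in that region (e.g.\ when $r_1=r_2>1$), and your handling of negative $\ell$ via $r_1\ge r_2$ is sound.
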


\begin{rem}
Figure~\ref{fig-regions1} shows the regions in parts~(a)--(c) of Lemma~\ref{lem6new} in the case $r_2\ne1$
in the coordinates 
$$
  (X,Y)
  =\left(\frac{r_1^2-r_2^2}{r_2^2-1},\frac{1}{r_2^2-1}\right)
  =\left(\frac{\cosh^2(m_1/2)-1}{\cosh^2(m_2/2)-1}-1,\frac{1}{\cosh^2(m_2/2)-1}\right).
$$
Part~(a) corresponds to the part of the shaded region with $X\ge2$.
Part~(b) for $k\ge2$ corresponds to the part of the shaded region with $\frac{2}{k}\le X\le\frac{2}{k-1}$.
Part~(c) corresponds to the unshaded region above the broken line.
\end{rem}

\myskip\noindent
Finally we compute traces of certain elements in the group to rephrase conditions~$(*)$ in Lemmas~\ref{lem-*} and~\ref{lem4new} in terms of these traces and in terms of ellipticity of these elements.

\begin{lem}
\label{lem-traces}
The traces of the elements
$$\wal=I_1(I_2I_1)^{\ell}I_3\quad\text{and}\quad \wb=I_1I_2I_3$$
are
\begin{align*}
  \trace(\wal)
  &=4\big|\ell r_1e^{i\theta}+(\ell+1)r_2e^{-i\theta}\big|^2-1\\
  &=4(\ell r_1-(\ell+1)r_2)^2-1+16\ell(\ell+1)r_1r_2\sin^2(\al/2),\\
  \trace(\wb)
  &=-4|r_2e^{i\theta}+r_1e^{-i\theta}|^2-1+i\cdot 8r_1r_2\sin(2\theta)\\
  &=-(4r_1^2+4r_2^2+1)+8r_1r_2\cdot e^{i\al}\\
  &=-4(r_1-r_2)^2-1-16r_1r_2\sin^2(\al/2)+i\cdot 16r_1r_2\sin(\al/2)\cos(\al/2),\\
\end{align*}
therefore conditions~$(*)$ in Lemmas~\ref{lem-*} and~\ref{lem4new} are equivalent to
$$\Re(\trace(\wb))\le -5\quad\text{and}\quad\trace(\wal)\ge3~\text{for all}~\ell\in\z\setminus\{-1,0\}.$$
Moreover, $\trace(\wal)\ge3$ is equivalent to~$\wal$ being not regular elliptic,
while $\Re(\trace(\wb))\le -5$ implies that $\wb$ is non-elliptic (but is not equivalent to it).
\end{lem}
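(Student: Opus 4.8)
\emph{The plan.} The statement is computational together with an appeal to the deltoid criterion, so I would split it into (i) producing the trace formulas, (ii) matching them with conditions~$(*)$ via Lemma~\ref{lem4new}, and (iii) reading off (non-)ellipticity from the position of the trace relative to~$\De$.

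First I would fix explicit $\SU(2,1)$ representatives of the generators. Here $I_3$ is the inversion $[z_1:z_2:z_3]\mapsto[z_3:-z_2:z_1]$ already used in Lemma~\ref{lem1}, while $I_1$ and $I_2$ are the inversions in the vertical chains through $r_2e^{i\theta}$ and $-r_1e^{-i\theta}$, given by the explicit matrix for inversion in a vertical chain recorded in Section~\ref{sec-basics}. Each has determinant~$1$ and trace~$-1$, so traces of words in them are well defined. The key simplification is that $I_2I_1$ is a Heisenberg translation: multiplying the two upper-triangular inversion matrices identifies the product with the translation matrix of Section~\ref{sec-basics} for the parameter $\tau=-2(r_1e^{-i\theta}+r_2e^{i\theta})$ and a vertical part $t=-8r_1r_2\sin(2\theta)$. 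Since the cross term $\Im(|\tau|^2)$ vanishes, the power $(I_2I_1)^{\ell}$ is simply the Heisenberg translation by $(\ell\tau,\ell t)$, which I would write as a single matrix. Multiplying on the left by $I_1$ and on the right by $I_3$ then produces $\wal$ as a concrete $3\times3$ matrix whose trace is one diagonal entry plus~$-1$; the same short computation gives $\wb=I_1I_2I_3$. Substituting $2\theta=\pi-\al$ (so that $\cos(2\theta)=2\sin^2(\al/2)-1$, $\sin(2\theta)=\sin\al$, and $\sin(\al/2)=\cos\theta$) yields the two displayed expressions, the imaginary contributions of the translation cancelling so that $\trace(\wal)$ is real.

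The equivalence with conditions~$(*)$ is then pure algebra. By Lemma~\ref{lem4new}, $(*)$ reads $4r_1r_2\sin^2(\al/2)\ge\fb$ together with $4r_1r_2\sin^2(\al/2)\ge\fa(\ell)$ for all $\ell\in\z\setminus\{-1,0\}$. Rearranging $\Re(\trace(\wb))\le-5$ reproduces $4r_1r_2\sin^2(\al/2)\ge\fb$ directly. For $\wal$ I would isolate $16\ell(\ell+1)r_1r_2\sin^2(\al/2)$ in $\trace(\wal)\ge3$; because $\ell(\ell+1)>0$ for every $\ell\in\z\setminus\{-1,0\}$, dividing by $4\ell(\ell+1)$ preserves the inequality and gives exactly $4r_1r_2\sin^2(\al/2)\ge\fa(\ell)$, so the two directions match termwise.

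Finally I would invoke the deltoid criterion (\cite{Gol}, Theorem~6.2.4). The curve $\De$ meets the real axis at the smooth point $z=-1$ and at the cusp $z=3$, its interior meeting the real axis in the open interval $(-1,3)$, and its two non-real cusps $3e^{\pm2\pi i/3}$ show that the whole closed region lies in $\{\Re z\ge-3/2\}$. From the first trace formula $\trace(\wal)$ is real and strictly greater than $-1$, since $\sin(\al/2)=\cos\theta>0$ for $\al\in(0,2\pi)$ and $\ell(\ell+1)>0$; hence $\wal$ is regular elliptic precisely when $-1<\trace(\wal)<3$, and $\trace(\wal)\ge3$ is equivalent to $\wal$ being not regular elliptic. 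For $\wb$, the hypothesis $\Re(\trace(\wb))\le-5<-3/2$ forces $\trace(\wb)$ outside $\De$, so $\wb$ is loxodromic and in particular non-elliptic; the slack between $-5$ and $-3/2$ is exactly why the condition is sufficient but not necessary. The main obstacle is the bookkeeping in the $\wal$ trace, and the Heisenberg-translation identity for $(I_2I_1)^{\ell}$ is the device that keeps it manageable.
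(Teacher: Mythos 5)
Your proposal is correct and takes essentially the same approach as the paper: compute the traces explicitly, rearrange $\trace(\wal)\ge3$ and $\Re(\trace(\wb))\le-5$ into the inequalities of Lemma~\ref{lem4new}, and settle ellipticity via the deltoid criterion using that $\trace(\wal)$ is real and strictly greater than $-1$ while the real axis meets the inside of~$\De$ in $(-1,3)$. The paper's proof merely calls the trace computations ``straightforward''; your Heisenberg-translation evaluation of $(I_2I_1)^{\ell}$ and the observation that the closed region bounded by~$\De$ lies in $\{\Re(z)\ge-3/2\}$ are correct fillings-in of the details it omits.
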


\begin{proof}
The computations of the traces are straightforward.
The ellipticity of the elements~$\wal$ and~$\wb$ 
can be determined by looking at the position of their traces in the complex plane in relation to the deltoid~$\De$ as explained in section~\ref{sec-basics}.
The traces of the elements~$\wal$ are real.
The portion of the real axis within the deltoid~$\De$ is $(-1,3)$.
It is easy to see that $\trace(\wal)\ge-1$, hence $\trace(\wal)\ge3$ is equivalent to~$\wal$ being not regular elliptic.
The condition $\Re(\trace(\wb))\le -5$ implies that $\wb$ is non-elliptic (but is not equivalent to it).
\end{proof}

\myskip\noindent
Combining the results of Lemmas~\ref{lem1}, \ref{lem-*}, \ref{lem4new}, \ref{lem6new} and \ref{lem-traces} we obtain Propositions~\ref{conditions2-typeA} and~\ref{conditions2-typeB}.

\section{Discreteness for all Values of the Angular Invariant}

\label{sec-discr-for-all-alpha}

\myskip\noindent
Let $\Ga(\al)$ be a complex hyperbolic $[m_1,m_2,0]$-triangle groups with the angular invariant~$\al$.
Proposition~\ref{discrete} states that for some choices of~$m_1$ and~$m_2$ the discreteness conditions in Lemma~\ref{lem4new} allow us to show that $\Ga(\al)$ is discrete for all values of~$\al$.

\myskip\noindent
{\bf Proof of Proposition~\ref{discrete}:}
The conditions 
$$r_1-r_2\in\left(\bigcup\limits_{k=2}^{\infty}\left[\frac{r_2+1}{k},\frac{r_2-1}{k-1}\right]\right)\cup\left[r_2+1,\infty\right)$$
of Proposition~\ref{discrete} can be rewritten as
$$
  r_1\ge 2r_2+1,
  \quad\text{or}\quad 
  \frac{(k+1)r_2+1}{k}\le r_1\le \frac{kr_2-1}{k-1}
$$
for some integer $k\ge 2$.
(Note that the latter condition can only hold for $r_2\ge 2k-1$.)
The corresponding regions (in the coordinates $x=\frac{r_1-r_2}{r_2}=\frac{r_1}{r_2}-1$, $y=\frac{1}{r_2}$) are the shaded areas in Figure~\ref{fig-regions2}.

\begin{figure}[h]
  \begin{center}
    \forcehmode
      \bgroup
        \beginpicture
          \setcoordinatesystem units <5cm,5cm>
          \setplotarea x from 0 to 2.1, y from 0 to 1.2
          \arrow <7pt> [0.2,0.5] from 0 0 to 0 1.2
          \arrow <7pt> [0.2,0.5] from 0 0 to 2.1 0      
          \plot 0 1 2.1 1 /
          \setshadegrid span <2pt>
          \plot 0.2 0 0.2222 0.1111 0.25 0 0.2857 0.1428 0.3333 0 0.4 0.2 0.5 0 0.6666 0.3333 1 0 2 1 2.1 1 /
          \vshade 0.2 0 0 0.2222 0 0.1111 0.25 0 0 0.2857 0 0.1428 0.3333 0 0 0.4 0 0.2 0.5 0 0 0.6666 0 0.3333 1 0 0 2 0 1 2.1 0 1 /
          \plot 0.125 0 0.1333 0.0666 0.1428 0 0.1538 0.0769 0.1666 0 0.1818 0.0909 0.2 0 /
          \vshade 0.125 0 0 0.1333 0 0.0666 0.1428 0 0 0.1538 0 0.0769 0.1666 0 0 0.1818 0 0.0909 0.2 0 0 /
          \setdots
          \plot 0 1 1 0 2 1 /
          \plot 0 1 0.5 0 1 1 /
          \plot 0 1 0.333 0 0.666 1 /
          \plot 0 1 0.25 0 0.5 1 /
          \multiput {$\bullet$} at 0 0 0.125 0 0.1428 0 0.1666 0 0.2 0 0.25 0 0.3333 0 0.5 0 1 0 /   
          \multiput {$\bullet$} at 0.1333 0.0666 0.1538 0.0769 0.1818 0.0909 0.2222 0.1111 0.2857 0.1429 0.4 0.2 0.6666 0.3333 2 1 /   
          \put {$0$} [tr] <-3pt,-3pt> at 0 0
          \put {$\frac{1}{1}$} [t] <0pt,-5pt> at 1 0
          \put {$\frac{1}{2}$} [t] <0pt,-5pt> at 0.5 0
          \put {$\frac{1}{3}$} [t] <0pt,-5pt> at 0.3333 0
          \put {$\frac{1}{4}$} [t] <0pt,-5pt> at 0.25 0
          \put {$\frac{1}{5}$} [t] <0pt,-5pt> at 0.2 0
          \put {$\cdots$} [t] <0pt,-5pt> at 0.1 0
          \put {$(\frac{2}{3},\frac{1}{3})$} [l] <5pt,0pt> at 0.6666 0.3333
          \put {$(\frac{2}{5},\frac{1}{5})$} [l] <5pt,0pt> at 0.4 0.2
          \put {$x=\frac{r_1}{r_2}-1$} [tr] <0pt,-3pt> at 2.1 0
          \put {$y=\frac{1}{r_2}$} [l] <3pt,0pt> at 0 1.2
          \put {$r_2=1$} [b] <0pt,3pt> at 1.5 1
          \put {$r_1-2r_2=1$} [l] <5pt,0pt> at 1.25 0.25
          \put {$r_1-2r_2=-1$} [l] <5pt,0pt> at 0.75 0.25
        \endpicture
      \egroup
  \end{center}
  \caption{Conditions of Proposition~\ref{discrete}}
  \label{fig-regions2}
\end{figure}

\noindent
First suppose  $r_1\ge 2r_2+1$.
Then
$$r_1-r_2\ge r_2+1\ge 2.$$
If $\ell\ge 1$ then
$$\ell r_1-(\ell+1)r_2 \ge (\ell-1)r_2+\ell \ge 1.$$
If $\ell\le 0$
$$\ell r_1-(\ell+1)r_2 \le (\ell-1)r_2+\ell \le -1.$$
Therefore $\fb,\fa(\ell)\le0$ and the conditions on $4r_1r_2\sin^2(\al/2)$ in Lemma \ref{lem4new} are always satisfied.

Now suppose $\frac{(k+1)r_2+1}{k}\le r_1\le \frac{kr_2-1}{k-1}$ for some 
integer $k\ge 2$ and note that $r_2\ge 2k-1$.
Then
$$r_1-r_2\ge \frac{r_2+1}{k}\ge 2.$$
If $\ell\ge k$ then
$$\ell r_1-(\ell+1)r_2 \ge \frac{(\ell-k)r_2+\ell}{k} \ge 2(\ell-k)+1\ge 1.$$
If $0\le\ell\le k-1$ then 
$$\ell r_1-(\ell+1)r_2 \le \frac{-(k-\ell-1)r_2-\ell}{k-1}\le -2(k-\ell-1)-1\le -1.$$
If $\ell\le 0$ then
$$\ell r_1-(\ell+1)r_2 \le \frac{(\ell-k)r_2+\ell}{k} \le 2\ell-2(k-1)-1\le -1.$$
Therefore $\fb,\fa(\ell)\le0$ and the conditions on $4r_1r_2\sin^2(\al/2)$ in Lemma \ref{lem4new} are always satisfied.\qed

\myskip\noindent
We now compare the conditions on $r_1$ and $r_2$ coming from Lemma \ref{lem6new} and from Proposition~\ref{discrete}.

\begin{prop}
\label{hyper-Phi-k}
Suppose $m_1\ge m_2\ge0$.
In the case $m_2=0$ any complex hyperbolic ultra-parallel $[m_1,0,0]$-triangle group is discrete if $r_1\ge3$.
Now suppose $m_2>0$.
For each positive integer $k$ define 
$$\Phi_k(X)=\frac{(k^2X-2k-1)^2}{4k(k+1)(kX-1)}.$$
\begin{enumerate}
\item[(a)] If 
$$
\frac{r_1^2-r_2^2}{r_2^2-1}\ge 2 \quad \hbox{ and }\quad
\frac{1}{r_2^2-1}\le \Phi_1\left(\frac{r_1^2-r_2^2}{r_2^2-1}\right)
$$
then any complex hyperbolic ultra-parallel $[m_1,m_2,0]$-triangle group is discrete.
\item[(b)] If there is an integer $k\ge 2$ so that
$$
\frac{2}{k}\le \frac{r_1^2-r_2^2}{r_2^2-1}\le \frac{2}{k-1} \quad \hbox{\and }\quad
\frac{1}{r_2^2-1}\le \Phi_k\left(\frac{r_1^2-r_2^2}{r_2^2-1}\right)
$$
then any complex hyperbolic ultra-parallel $[m_1,m_2,0]$-triangle group is discrete.
\end{enumerate}
\end{prop}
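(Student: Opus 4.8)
The plan is to show that, under the stated hypotheses, conditions~$(*)$ of Lemmas~\ref{lem-*} and~\ref{lem4new} hold for \emph{every} value of the angular invariant~$\al\in(0,2\pi)$; discreteness of all the groups $\Ga(\al)$ then follows from Lemma~\ref{lem1}. By Lemma~\ref{lem4new}, conditions~$(*)$ amount to $4r_1r_2\sin^2(\al/2)\ge\fb$ together with $4r_1r_2\sin^2(\al/2)\ge\fa(\ell)$ for all $\ell\in\z\setminus\{-1,0\}$. Since $\sin^2(\al/2)$ can be made arbitrarily small as $\al\to0$, these inequalities hold for every~$\al$ precisely when $\fb\le0$ and $\fa(\ell)\le0$ for all such~$\ell$. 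Writing $X=\frac{r_1^2-r_2^2}{r_2^2-1}$ and $Y=\frac{1}{r_2^2-1}$ as in the remark after Lemma~\ref{lem6new}, the hypothesis $\frac2k\le X\le\frac2{k-1}$ (resp.\ $X\ge2$ for $k=1$) is exactly the condition under which Lemma~\ref{lem-about-fA}, and hence Lemma~\ref{lem5new}, gives $\fa(k)=\max_\ell\fa(\ell)$. So it suffices to establish the two inequalities $\fa(k)\le0$ and $\fb\le0$. The degenerate case $m_2=0$ (where $r_2=1$ and $Y$ is undefined) I would treat directly: there $\fa(1)=\frac12\bigl(1-(r_1-2)^2\bigr)\le0$ iff $r_1\ge3$, while $\fa$ is decreasing in $\ell\ge1$ so $\fa(1)=\max_\ell\fa(\ell)$, and $\fb=1-(r_1-1)^2\le0$ once $r_1\ge2$; hence $r_1\ge3$ yields all the required inequalities.

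The heart of the argument is the identification $\fa(k)\le0\iff Y\le\Phi_k(X)$. Since $\fa(k)=\frac{1-(kr_1-(k+1)r_2)^2}{k(k+1)}$ and $k(k+1)>0$, we have $\fa(k)\le0$ iff $|kr_1-(k+1)r_2|\ge1$. To locate the boundary $\fa(k)=0$ in the $(X,Y)$-plane I would parametrise each branch $kr_1-(k+1)r_2=\pm1$ by $r_1=\frac{(k+1)r_2\pm1}{k}$, substitute into $r_1^2-r_2^2$ and factor (the factors being $r_2\pm1$ and $(2k+1)r_2\pm1$), which expresses $X$ as a rational function of~$r_2$ that I then invert. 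A short computation of $r_2^2-1=(r_2-1)(r_2+1)$ gives, on \emph{both} branches, $r_2^2-1=\frac{4k(k+1)(kX-1)}{(k^2X-2k-1)^2}$, so that $Y=\Phi_k(X)$ exactly on the boundary. Finally I would fix the direction of the inequality by perturbing off the boundary at fixed~$X$: increasing $r_2$ at fixed~$X$ decreases~$Y$ and pushes $|kr_1-(k+1)r_2|$ above~$1$, confirming that the admissible region $Y\le\Phi_k(X)$ is the sublevel set $\{\fa(k)\le0\}$.

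It then remains to verify $\fb\le0$ on this region, and the cleanest route is to compare $\fa(k)$ with $\fb$. A direct rearrangement of $\fa(k)-\fb$ gives $\fa(k)\ge\fb\iff Y\le\frac{kX-1}{k(k+1)}$, so it is enough to show $\Phi_k(X)\le\frac{kX-1}{k(k+1)}$ throughout $\frac2k\le X\le\frac2{k-1}$ (and for all $X\ge2$ when $k=1$). Clearing the positive denominators, this reduces to the quadratic inequality $(k^2X-2k-1)^2\le4(kX-1)^2$, i.e.\ $-2(kX-1)\le k^2X-2k-1\le2(kX-1)$, and both halves are elementary to confirm on the stated $X$-range. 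Consequently $Y\le\Phi_k(X)$ forces $\fa(k)\ge\fb$, and together with $\fa(k)\le0$ this yields $\fb\le\fa(k)\le0$, completing the verification of conditions~$(*)$.

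The routine but delicate part is the boundary computation: I must get the factorisations of $r_1^2-r_2^2$ right so that the factor $r_2+1$ cancels cleanly and $\Phi_k$ emerges, and in particular check that the two sign-branches $kr_1-(k+1)r_2=\pm1$ produce the \emph{same} function $\Phi_k$ on complementary sub-intervals of~$X$ (split at $X=\tfrac{2k+1}{k^2}$). The main conceptual obstacle I expect is pinning down the correct inequality direction in the second paragraph, since the map $(r_1,r_2)\mapsto(X,Y)$ is nonlinear and one must be certain that the region in the proposition really is $\{\fa(k)\le0\}$ rather than its complement.
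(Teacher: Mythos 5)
Your proposal is correct, and its overall architecture is the same as the paper's: reduce the claim to showing $\fa(k)\le0$ and $\fb\le0$ under the stated hypotheses (so that conditions~$(*)$ hold for every~$\al$), identify the region $\{Y\le\Phi_k(X)\}$ with $\{\fa(k)\le0\}$, and use the comparison $\Phi_k(X)\le\frac{kX-1}{k(k+1)}$ on $\frac2k\le X\le\frac2{k-1}$ to handle~$\fb$. Indeed, your observation that $\fa(k)\ge\fb\iff Y\le\frac{kX-1}{k(k+1)}$ is exactly the extra region condition appearing in part~(b) of Lemma~\ref{lem6new}; the paper packages the same computation by verifying that its hypotheses imply those of Lemma~\ref{lem6new}. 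The one place where you genuinely diverge is the proof of the central equivalence $Y\le\Phi_k(X)\iff\fa(k)\le0$: you parametrise the two boundary branches $kr_1-(k+1)r_2=\pm1$, check that both map onto the curve $Y=\Phi_k(X)$ (on complementary subintervals split at $X=\frac{2k+1}{k^2}$), and then fix the side of the curve by a perturbation argument. This can be made rigorous, but it is exactly the step you flag as delicate, and it requires a genuine connectivity/monotonicity analysis (the sign of $\frac{\partial}{\partial r_2}\bigl(kr_1-(k+1)r_2\bigr)$ at fixed $X$ differs on the two branches). The paper avoids all of this by clearing denominators directly: since $kX-1>0$ on the relevant range, $Y\le\Phi_k(X)$ is equivalent to
$$
0\le\Bigl(k^2(r_1^2-1)-(k+1)^2(r_2^2-1)\Bigr)^2-4k(k+1)\Bigl(k(r_1^2-1)-(k+1)(r_2^2-1)\Bigr)
=\Bigl(\bigl(kr_1+(k+1)r_2\bigr)^2-1\Bigr)\Bigl(\bigl(kr_1-(k+1)r_2\bigr)^2-1\Bigr),
$$
and since the first factor is strictly positive this holds if and only if $\bigl(kr_1-(k+1)r_2\bigr)^2\ge1$, i.e.\ $\fa(k)\le0$, with no branch analysis and no worry about which side of the curve is which. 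If you write your argument up, substituting this factorisation for your second paragraph would shorten the proof and remove its only fragile step.
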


\begin{proof}
In the case $m_2=0$ we have~$r_2=1$, so that the conditions of part~(b) of Lemma~\ref{lem6new} are never satisfied, while the conditions of parts~(a) and~(c) can be rewritten as
$$r_1\ge\sqrt{3}\quad\text{and}\quad\sin^2\left(\frac{\al}{2}\right)\ge\frac{1-(r_1-2)^2}{8r_1}$$
or
$$r_1\le\sqrt{3}\quad\text{and}\quad\sin^2\left(\frac{\al}{2}\right)\ge\frac{1-(r_1-1)^2}{4r_1}.$$
For $r_1\ge3$ we use part~(a) of Lemma~\ref{lem6new} to see that the conditions are satisfied for all values of~$\al$.
Proposition~\ref{discrete} in the case $r_2=1$ gives the same condition $r_1\ge3$.

\myskip
Now suppose $m_2>0$, i.e.\ $r_2>1$.
Note that the statement in part (a) is the same as the statement in (b) with $k=1$
but where there is no upper bound on $(r_1^2-r_2^2)/(r_2^2-1)$.
The conditions on~$r_1^2-1$ in part~(b) of Lemma~\ref{lem6new} can be rewritten as
$$\max\left\{\frac{1}{k}+\frac{k+1}{r_2^2-1},\frac{2}{k}\right\}\le\frac{r_1^2-r_2^2}{r_2^2-1}\le\frac{2}{k-1}$$
for~$k\ge2$ and the same but without the upper bound on $(r_1^2-r_2^2)/(r_2^2-1)$ for~$k=1$.
Note that the function 
$$\Phi_k(X)=\frac{(k^2X-2k-1)^2}{4k(k+1)(kX-1)}=\frac{1}{4}\left(\frac{k(kX-1)}{k+1}+\frac{k+1}{k(kX-1)}\right)-\frac{1}{2}$$
defines a hyperbola with vertical asymptote $X=\tfrac{1}{k}$, tangent to the $X$ axis at $X=\tfrac{2k+1}{k^2}$
with values
$$\Phi_k\left(\frac{2}{k-1}\right)=\frac{1}{4(k-1)k},\quad \Phi_k\left(\frac{2}{k}\right)=\frac{1}{4k(k+1)}.$$
It is not hard to show that when $k\ge2$ and $\tfrac{2}{k}\le X\le\tfrac{2}{k-1}$
$$\Phi_k(X)\le\frac{kX-1}{4k(k+1)}<\frac{kX-1}{k(k+1)},$$
hence $\tfrac{1}{r_2^2-1}\le\Phi_k(X)$ implies
$$\frac{1}{r_2^2-1}\le\Phi_k(X)\le\frac{kX-1}{4k(k+1)}<\frac{kX-1}{k(k+1)}$$
and therefore
$$\frac{1}{k}+\frac{k+1}{r_2^2-1}\le X.$$

\myskip\noindent
To summarise, 
$$
   \frac{2}{k}\le \frac{r_1^2-r_2^2}{r_2^2-1}\le \frac{2}{k-1} \quad \hbox{\and }\quad
   \frac{1}{r_2^2-1}\le \Phi_k\left(\frac{r_1^2-r_2^2}{r_2^2-1}\right)
$$
implies
$$\max\left\{\frac{1}{k}+\frac{k+1}{r_2^2-1},\frac{2}{k}\right\}\le\frac{r_1^2-r_2^2}{r_2^2-1}\le\frac{2}{k-1}.$$
Thus, using Lemma~\ref{lem6new}, if we can show that
$$\sin^2(\alpha/2)\ge \frac{1-\bigl(kr_1-(k+1)r_2\bigr)^2}{4k(k+1)r_1r_2}$$
then conditions $(*)$ hold, and so the group is discrete.

\myskip\noindent
The condition 
$$\frac{1}{r_2^2-1}\le \Phi_k\left(\frac{r_1^2-r_2^2}{r_2^2-1}\right)$$
is equivalent to
\begin{eqnarray*}
  0
  &\le&\Bigl(k^2(r_1^2-1)-(k+1)^2(r_2^2-1)\Bigr)^2-4k(k+1)\Bigl(k(r_1^2-1)-(k+1)(r_2^2-1)\Bigr) \\
  &=&\bigl(k^2r_1^2-(k+1)^2r_2^2\bigr)^2-2k^2r_1^2-2(k+1)^2r_2^2+1 \\
  &=&\Bigl(\bigl(kr_1+(k+1)r_2\bigr)^2-1\Bigr)\Bigl(\bigl(kr_1-(k+1)r_2\bigr)^2-1\Bigr).
\end{eqnarray*}
In particular, we have $\bigl(kr_1-(k+1)r_2\bigr)^2\ge 1$.
Hence the conditions from Lemma~\ref{lem6new} are satisfied for all values of~$\al$.
\end{proof}

\begin{rem}
Figure~\ref{fig-regions3} shows the regions in parts (a)--(b)  of Proposition~\ref{hyper-Phi-k}
in the case $r_2\ne1$ in the coordinates 
$$
  (X,Y)
  =\left(\frac{r_1^2-r_2^2}{r_2^2-1},\frac{1}{r_2^2-1}\right)
  =\left(\frac{\cosh^2(m_1/2)-1}{\cosh^2(m_2/2)-1}-1,\frac{1}{\cosh^2(m_2/2)-1}\right).
$$
The lightly shaded region is as in Figure~\ref{fig-regions1}.
The values of $(m_1,m_2)$ in Proposition~\ref{hyper-Phi-k} correspond
to the darkly shaded regions under hyperbolae in Figure~\ref{fig-regions3}.
Only hyperbolae~$\Phi_k$ for $1\le k\le5$ are shown, but the dark shaded regions continue to the left.
\end{rem}

%

\section{Non-Discreteness Results}

\label{sec-non-discr}

\myskip\noindent
In this section we will use Shimizu's Lemma to describe those values of the angular invariant~$\al$ for which the group is not discrete,
compare also with the similar use of Shimizu's Lemma for (non ultra-parallel) complex hyperbolic $(m,n,\infty)$ groups in~\cite{Sun}, Theorem~3.7(2).
We will use the following complex hyperbolic version of Shimizu's Lemma introduced in~\cite{Par97}, Theorem~2.1:

\begin{lem}
\label{Shimizu}
Let $G$ be a discrete subgroup of $\PU(2,1)$.
Let $g\in G$ be a Heisenberg translation by~$(\xi,v)$ and $h\in G$ be an element that satisfies $h(\infty)\ne\infty$, 
then
$$r_h^2\le\rho_0(g(h^{-1}(\infty)),h^{-1}(\infty))\rho_0(g(h(\infty)),h(\infty))+4|\xi|^2,$$
where $\rho_0$ is the Cygan metric on~$\calN$ and $r_h$ is the radius of the isometric sphere of~$h$.
\end{lem}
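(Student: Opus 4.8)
The plan is to reduce the statement to a statement about the dynamics of the isometric sphere of~$h$ under the Heisenberg translation~$g$, and then to exploit the fact that a discrete group cannot contain an infinite sequence of isometries whose isometric spheres shrink to a point. First I would recall the explicit description of the isometric sphere: for $h=(h_{ij})\in\SU(2,1)$ with $h(\infty)\ne\infty$, its isometric sphere is the Cygan sphere with centre $h^{-1}(\infty)$ and radius $r_h=1/\sqrt{|h_{31}|}$, and the key geometric fact is that $h$ maps the isometric sphere of $h$ to the isometric sphere of $h^{-1}$, which is the Cygan sphere centred at $h(\infty)$ with the \emph{same} radius~$r_h$. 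The proof then proceeds by the standard Shimizu iteration argument adapted to the Cygan metric: one considers the sequence $h_{n+1}=h_n g h_n^{-1}$ (or $g_n = h g_{n-1} h^{-1}$), conjugating~$g$ by powers built from~$h$ and~$g$, and shows that if the desired inequality \emph{failed}, then the radii $r_{h_n}$ of the resulting isometric spheres would strictly increase (equivalently, the translation lengths measured in the Cygan metric would shrink), producing infinitely many distinct elements accumulating at the identity, contradicting discreteness of~$G$.

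The main quantitative step is to control how the Cygan radius of the conjugate changes under one step of the iteration. Writing $w=h^{-1}(\infty)$ and $w'=h(\infty)$ for the two centres, I would compute the matrix entry in position $(3,1)$ of the conjugated element $h g h^{-1}$ (or the relevant iterate) in terms of the translation parameter $(\xi,v)$ of~$g$ and the Cygan displacements $\rho_0(g(w),w)$ and $\rho_0(g(w'),w')$. Because $g$ is a Heisenberg translation by $(\xi,v)$, its action is explicit via the upper-triangular matrix recorded in section~\ref{sec-basics}, so this entry is a concrete bilinear expression; the cross term $4|\xi|^2$ arises precisely from the vertical (central) part of the translation interacting with itself. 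The inequality to be proved is exactly the assertion that the new radius does not exceed the old one \emph{unless} $r_h^2 \le \rho_0(g(w),w)\,\rho_0(g(w'),w') + 4|\xi|^2$ holds; so establishing this one-step estimate is the heart of the matter.

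I expect the main obstacle to be the bookkeeping in this one-step radius estimate: unlike the classical (real hyperbolic) Shimizu lemma, the Cygan metric is not Riemannian and does not satisfy the triangle inequality in a naive form, so the usual telescoping of distances must be replaced by the careful identity $\rho_0(g(w),w)^2 = \bigl||\,g(w)-w\,|^2 - i(\cdots) - 2i\,\Im(\cdots)\bigr|$ together with the behaviour of this quantity under the iteration. The delicate point is to verify that the iteration either terminates with the claimed inequality or forces a strictly decreasing sequence of radii with a definite gap, so that infinitely many distinct group elements are produced; once that monotonicity-with-gap is in hand, discreteness of~$G$ gives the contradiction and hence the inequality. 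Since this is precisely Theorem~2.1 of~\cite{Par97}, I would follow that argument, merely adapting the normalisations to the Hermitian form and Cygan metric fixed in section~\ref{sec-basics}, and I would not reproduce the full computation here.
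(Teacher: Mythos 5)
The paper gives no proof of this lemma at all: it is quoted directly as Theorem~2.1 of~\cite{Par97}, which is exactly what you do, so your treatment matches the paper's. Your sketch of the iteration argument (isometric spheres, conjugation by the Heisenberg translation, contradiction with discreteness) is a reasonable outline of Parker's actual proof, and deferring the quantitative one-step estimate to~\cite{Par97} is precisely the level of detail the paper itself adopts.
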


\myskip\noindent
{\bf Proof of Proposition~\ref{non-discrete}:}
In an ultra-parallel triangle group $\<I_1,I_2,I_3\>$ we will apply Lemma~\ref{Shimizu} to the elements $g=I_2I_1$ and $h=I_3$.
Direct computation shows that 
$$
  g=I_2I_1
  =\left(\begin{matrix}
  1 & -\sqrt{2}\bar{\xi} & -|\xi|^2+iv \\ 0 & 1 & \sqrt{2}\xi \\ 0 & 0 & 1 
  \end{matrix}\right)
$$
where $\xi=2(r_1e^{-i\theta}+r_2e^{i\theta})$ and $v=8r_1r_2\sin(2\theta)$.
This is the matrix of the Heisenberg translation by~$(\xi,v)\in\calN$.
The radius of the isometric sphere of the element 
$$
h=I_3 = \left(\begin{matrix} 
0 & 0 & 1 \\ 0 & -1 & 0 \\ 1 & 0 & 0 \end{matrix}\right)
$$
is $r_h=1$.
The element~$h$ satisfies
$$
h(\infty)=h^{-1}(\infty)=[0:0:1],\quad\text{in particular}\quad h(\infty)\ne\infty.
$$
The point~$[0:0:1]\in\dd\chp$ corresponds to the point~$(0,0)\in\calN$.
The translation length of $g$ at $h(\infty)=h^{-1}(\infty)$ is 
$$
\rho_0(g(h(\infty)),h(\infty))=\rho_0(g(h^{-1}(\infty)),h^{-1}(\infty))
=\sqrt{\bigl|\,|\xi|^2-iv\bigr|}.
$$
Substituting these values in the inequality in Lemma~\ref{Shimizu} we obtain that if 
the group is discrete then
$$
1\le||\xi|^2-iv|+4|\xi|^2=\sqrt{|\xi|^4+v^2}+4|\xi|^2.
$$
Finally note that
$$|\xi|^2=|2(r_1e^{-i\theta}+r_2e^{i\theta})|^2=4(r_1^2+r_2^2+2r_1r_2\cos(2\theta)).$$
Using $\cos(2\theta)=-\cos(\al)=2\sin^2(\frac{\al}{2})-1$ and $\sin(2\theta)=\sin(\al)=2\sin(\frac{\al}{2})\cos(\frac{\al}{2})$ we obtain
\begin{align*}
  |\xi|^2&=4(r_1-r_2)^2+16r_1r_2\sin^2\left(\frac{\al}{2}\right),\quad
  v=16r_1r_2\sin\left(\frac{\al}{2}\right)\cos\left(\frac{\al}{2}\right)
\end{align*}
and hence $\sqrt{|\xi|^4+v^2}+4|\xi|^2$ is equal to 
$$4\cdot\sqrt{(r_1-r_2)^4+8r_1r_2(r_1^2+r_2^2)\sin^2\left(\frac{\al}{2}\right)}+16(r_1-r_2)^2+64r_1r_2\sin^2\left(\frac{\al}{2}\right).$$
Thus the group is not discrete if the following inequality is satisfied:
$$
\sqrt{16(r_1-r_2)^4+128r_1r_2(r_1^2+r_2^2)\sin^2\left(\frac{\al}{2}\right)}
+16(r_1-r_2)^2+64r_1r_2\sin^2\left(\frac{\al}{2}\right)<1.
$$
Rearranging and taking squares on both sides, we conclude that 
$X=64r_1r_2\sin^2(\frac{\al}{2})$
satisfies the following inequalities
\begin{equation}
\label{eq-ineq}
X^2-2bX+c>0,\quad X<d,
\end{equation}
where
\begin{align*}
b&=1-16(r_1-r_2)^2+(r_1^2+r_2^2), \quad c=\bigl(1-16(r_1-r_2)^2\bigr)^2-16(r_1-r_2)^4, \\
d&=1-16(r_1-r_2)^2.
\end{align*}
A straightforward computation shows that 
$$d^2-2bd+c=-2d(r_1^2+r_2^2)-16(r_1-r_2)^4<0.$$
If $b^2-c\ge0$ then the quadratic polynomial $X^2-2bX+c$ has real roots
and $X$ satisfies the inequalities \eqref{eq-ineq} if and only if it is less than the smaller root~$b-\sqrt{b^2-c}$ of $X^2-2bX+c$. That is:
\begin{align*}
  X<&1-16(r_1-r_2)^2+(r_1^2+r_2^2)\\
      &-\sqrt{16(r_1-r_2)^4+2\bigl(1-16(r_1-r_2)^2\bigr)(r_1^2+r_2^2)+(r_1^2+r_2^2)^2}.
\end{align*}
If $b^2-c<0$ then the polynomial $X^2-2bX+c$ has no real roots
and $X$ satisfies the inequalities \eqref{eq-ineq} if and only if $X<d$.
That is:
$$X<1-16(r_1-r_2)^2.$$
Using $X=64r_1r_2\sin^2(\alpha/2)$ and rearranging these expressions, 
we obtain the inequalities in Proposition~\ref{non-discrete}.
\qed

%


\section{Schwartz's conjecture}

\label{sec-ellipticity}

In this section we will consider our results in the context of the conjecture put forward by R.~Schwartz in his ICM talk in 2002~\cite{Sch02}:

\myskip
{\sl A complex hyperbolic \ppp-triangle group representation is discrete and faithful if and only if a group element~$w$ is non-elliptic,
where $w=\wa=I_3I_2I_3I_1$ or $w=\wb= I_1I_2I_3$ depending on~\ppp.}

\myskip
Note that Schwartz assumes $p_1\le p_2\le p_3$ which implies $r_1\le r_2\le r_3$.
We normalise differently so that $r_1\ge r_2\ge r_3=1$, hence instead of $I_3I_2I_3I_1$ the relevant element for us is $\wa=I_1I_2I_1I_3$.



\myskip
Discreteness conditions in Propositions~\ref{conditions2-typeA} and~\ref{conditions2-typeB} are 
$\trace(\wal)\ge3$ which is equivalent to~$\wal$ being not regular elliptic
and $\Re(\trace(\wb))\le -5$ which  implies that $\wb$ is non-elliptic but is not equivalent to it.
If we relax the conditions
$$\trace(\wal)\ge3\quad\text{for all}~\ell\in\z\quad\text{and}\quad\Re(\trace(\wb))\le -5$$
so that $\trace(\wal)<3$ for a single value of $\ell=k$ then the corresponding group element $\wak$ is elliptic:

\begin{prop}
\label{wak-elliptic}
Suppose that $m_1\ge m_2>0$ and for some integer $k\ge 2$
$$\max\left\{\frac{1}{k}+\frac{k+1}{r_2^2-1},\  \frac{2}{k}\right\}\le\frac{r_1^2-1}{r_2^2-1}-1\le\frac{2}{k-1},$$
where $r_j=\cosh(m_j/2)$, $j=1,2$.
If the condition
$$\max\{\fa(k+1),\fa(k-1)\}\le 4r_1r_2\sin^2\left(\frac{\al}{2}\right)<\fa(k)$$
is satisfied, then $\wak$ is elliptic and $\wal$ for $\ell\in\z\setminus\{-1,0,k\}$ are all non-elliptic.
\end{prop}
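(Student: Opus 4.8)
The plan is to convert every statement about ellipticity into an inequality on $4r_1r_2\sin^2(\al/2)$ using the trace formula of Lemma~\ref{lem-traces}. Recall that
$$\trace(\wal)=4(\ell r_1-(\ell+1)r_2)^2-1+16\ell(\ell+1)r_1r_2\sin^2\left(\frac{\al}{2}\right)$$
is real, and that by Lemma~\ref{lem-traces} the element $\wal$ fails to be regular elliptic exactly when $\trace(\wal)\ge3$. For any $\ell$ with $\ell(\ell+1)>0$ (that is, $\ell\ge1$ or $\ell\le-2$), dividing by the positive quantity $\ell(\ell+1)$ gives the equivalence
$$\trace(\wal)\ge3\iff 4r_1r_2\sin^2\left(\frac{\al}{2}\right)\ge\fa(\ell).$$
I would record this first, together with the observation that for $\al\in(0,2\pi)$ one has $\sin^2(\al/2)>0$, so that in fact $\trace(\wal)>-1$ strictly; hence $\trace(\wal)<3$ places the trace in the open segment $(-1,3)$, which is the intersection of the interior of the deltoid~$\De$ with the real axis, so $\wal$ is regular elliptic.

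With this dictionary, the element $\wak$ is immediate: since $k(k+1)>0$ and $4r_1r_2\sin^2(\al/2)<\fa(k)$ by hypothesis, we get $-1<\trace(\wak)<3$, so $\wak$ is regular elliptic, hence elliptic. For the remaining elements I would prove $\trace(\wal)\ge3$ for every $\ell\in\z\setminus\{-1,0,k\}$, which by the equivalence above reduces to the purely real-analytic claim
$$\fa(\ell)\le\max\{\fa(k-1),\fa(k+1)\}\le 4r_1r_2\sin^2\left(\frac{\al}{2}\right)\quad\text{for all}~\ell\in\z\setminus\{-1,0,k\},$$
the second inequality being exactly the standing hypothesis. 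Thus the whole problem collapses to showing that, away from $\ell=k$, the function $\fa$ is dominated by the larger of its two neighbouring values $\fa(k-1)$ and $\fa(k+1)$.

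To establish this I would invoke both parts of Lemma~\ref{lem-about-fA}, first checking that the proposition's hypothesis supplies both sets of conditions: rewriting $\frac{r_1^2-1}{r_2^2-1}-1\ge\frac{2}{k}$ as $r_1^2-1\ge\frac{k+2}{k}(r_2^2-1)$ yields the hypothesis of part~(a), while $\frac{r_1^2-1}{r_2^2-1}-1\le\frac{2}{k-1}$ rewrites as $r_1^2-1\le\frac{k+1}{k-1}(r_2^2-1)$, the hypothesis of part~(b). Part~(b) then makes $\fa$ non-decreasing on $\{1,\dots,k\}$, so $\fa(\ell)\le\fa(k-1)$ for $1\le\ell\le k-1$; the first clause of part~(a) makes $\fa$ non-increasing on $\{k,k+1,\dots\}$, so $\fa(\ell)\le\fa(k+1)$ for $\ell\ge k+1$; and the second clause of part~(a), applied with $\ell_1=k-1$ (legitimate since $k-1\ge k/2$ precisely when $k\ge2$) and $\ell_2=\ell\le-2$, gives $\fa(\ell)\le\fa(k-1)$ for all $\ell\le-2$. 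Combining the three cases delivers the required domination for every $\ell\in\z\setminus\{-1,0,k\}$.

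The hard part will not be the estimate on $\fa$ but the passage from \emph{not regular elliptic} to \emph{non-elliptic} for $\ell\ne k$. Lemma~\ref{lem-traces} only delivers $\trace(\wal)\ge3$, placing the real trace at or beyond the cusp of~$\De$; for $\trace(\wal)>3$ the element is loxodromic and hence non-elliptic, and indeed one checks that a non-regular-elliptic element with real trace can have trace at most~$3$, so the strict case causes no trouble. The borderline value $\trace(\wal)=3$, which genuinely can occur since the hypothesis at $\ell=k\pm1$ is a non-strict inequality, corresponds to a triple eigenvalue equal to~$1$ and must be handled separately: such an element is either unipotent, hence parabolic and non-elliptic, or the identity. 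I would therefore argue that $\wal$ is not the identity (for instance by exhibiting non-trivial action, $\wal(\infty)\ne\infty$), thereby forcing the parabolic alternative and completing the conclusion that $\wal$ is non-elliptic. This boundary analysis, rather than the monotonicity of $\fa$, is where the real care is needed.
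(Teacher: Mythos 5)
Your proof is correct and follows essentially the same route as the paper's: the trace dictionary of Lemma~\ref{lem-traces} converts everything into inequalities against $\fa$, and Lemma~\ref{lem-about-fA} is invoked with exactly the same choices ($\ell_1=k+1$ to dominate $\ell>k+1$, $\ell_1=k-1$ to dominate $1\le\ell<k-1$ and $\ell\le-2$). Your separate treatment of the borderline case $\trace(\wal)=3$ (identity versus unipotent, settled by $\wal(\infty)\ne\infty$) is in fact more careful than the paper's own proof, which passes from $\trace(\wal)\ge3$ to non-ellipticity without addressing that boundary possibility.
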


\begin{proof}
Setting $\ell_1=k+1$ in the first and $\ell_1=k-1$ in the other two cases in Lemma~\ref{lem-about-fA}
we obtain that $\fa(k+1)\ge\fa(\ell)$ for all integers $\ell>k+1$
and $\fa(\ell)\ge\fa(k-1)$ for all integers $\ell<k-1$, $\ell\ne-1,0$.
This means that our hypothesis that $4r_1r_2\sin^2(\al/2)\ge\max\{\fa(k-1),\fa(k+1)\}$ implies
$4r_1r_2\sin^2(\al/2)\ge\fa(\ell)$ for all $\ell\in\z\setminus\{-1,0,k\}$.
Recall that
$$4r_1r_2\sin^2(\al/2)\ge\fa(\ell)\iff\trace(\wal)\ge3\iff \wal~\text{is non-elliptic},$$
hence the conditions on $4r_1r_2\sin^2(\al/2)$ imply that
$\wak$ is elliptic and $\wal$ for $\ell\in\z\setminus\{-1,0,k\}$ are all non-elliptic.
\end{proof}

\myskip
With the help of this proposition we can choose $\al$ so that $\wb$ is non-elliptic, the element $\wak$ for some~$k\ge2$ is elliptic of infinite order
and all $\wal$ for $\ell\in\z\setminus\{-1,0,k\}$ are non-elliptic, in particular $\wa^{(1)}=\wa$ is non-elliptic.
Then the elements~$\wa$ and~$\wb$ are non-elliptic, but $\wak$ is elliptic of infinite order, hence the group is not discrete.
Therefore, in the ultra-parallel case, Schwartz's conjecture should be extended to include elements not only~$\wa$ and~$\wb$
but also $\wal$ with $\ell\ne1$.

\begin{example}
For $k\ge2$ let $r_1=k+1$ and $r_2=k$.
The conditions 
$$\max\left\{\frac{1}{k}+\frac{k+1}{r_2^2-1},\  \frac{2}{k}\right\}\le\frac{r_1^2-1}{r_2^2-1}-1\le\frac{2}{k-1}$$
become
$$\max\left\{(k+1)(k^2+k-1),(k+2)(k^2-1)\right\}\le k^2(k+2)\le k(k+1)^2$$
and are always satisfied.
In this case we have
$$\fb=\fa(k-1)=\fa(k+1)=0,\quad \fa(k)=\frac{1}{4k^2(k+1)^2}.$$
Proposition~\ref{conditions2-typeA} says that the group is discrete if 
$$4r_1r_2\sin^2\left(\frac{\al}{2}\right)\ge\fa(k)=\frac{1}{4k^2(k+1)^2}.$$
Proposition~\ref{wak-elliptic} implies that if 
$$4r_1r_2\sin^2\left(\frac{\al}{2}\right)<\fa(k)=\frac{1}{4k^2(k+1)^2}$$
then $\wak$ is elliptic while $\wal$ for all $\ell\ne k$ are non-elliptic.
The condition
$$4r_1r_2\sin^2\left(\frac{\al}{2}\right)\ge\fb=0$$
is always satisfied, hence $\Re(\trace(\wb)\le-5$ and the element $\wb$ is non-elliptic.
Choosing $\al$ with
$$
  \sin\left(\frac{\al}{2}\right)<\frac{1}{2k(k+1)},\quad
  \sin\left(\frac{\al}{2}\right)\ne\frac{\cos(q\cdot\pi)}{2k(k+1)}\quad\text{for all}~q\in\q,
$$
we obtain $[k+1,k,0]$-groups
with non-elliptic $\wb$ and $\wal$ for $\ell\in\z\setminus\{-1,0,k\}$,
but $\wak$ is elliptic of infinite order,
hence the group is not discrete.
\end{example}

\myskip
The condition for $\wak$ to be non-elliptic is given by an explicit inequality on the angular invariant~$\al$.
The question of where $\wb$ is non-elliptic is more subtle.
The trace
$$\trace(\wb)=-(4r_1^2+4r_2^2+1)+8r_1r_2\cdot e^{i\al}$$
is on the circle with centre~$-(4r_1^2+4r_2^2+1)$ and radius~$8r_1r_2$.
One has to carefully study the intersection of this circle and the deltoid~$\De$.

\begin{prop}
\label{wB-ell}
The element $\wb$ is non-elliptic for all values of~$\al$ if
$$7-4(r_1^2+r_2^2)+16(r_1^2-r_2^2)^2>0.$$
\end{prop}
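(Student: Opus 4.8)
The plan is to use the computation of $\trace(\wb)$ from Lemma~\ref{lem-traces}, namely
$$\trace(\wb)=-(4r_1^2+4r_2^2+1)+8r_1r_2\,e^{i\al}=c_0+R\,e^{i\al},$$
so that as $\al$ ranges over $(0,2\pi)$ the trace moves along the circle $\mathcal{K}$ with real centre $c_0=-(4r_1^2+4r_2^2+1)$ and radius $R=8r_1r_2$. By the description of the type of an isometry in terms of the deltoid~$\De$ given in section~\ref{sec-basics}, the element $\wb$ is non-elliptic for every $\al$ precisely when $\mathcal{K}$ avoids the open region bounded by $\De$. Since $\mathcal{K}$ is a circle, I would reduce this to a single distance comparison: $\mathcal{K}$ lies strictly outside $\De$ as soon as $R$ is smaller than the distance from $c_0$ to the nearest point of~$\De$.

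First I would parametrise the deltoid by $z(t)=2e^{it}+e^{-2it}$; a direct check shows $z(t)$ satisfies $|z|^4-8\Re(z^3)+18|z|^2=27$, that its three cusps are $z(0)=3$ and $z(\pm2\pi/3)=3e^{\pm2\pi i/3}$, and that it meets the real axis again at $z(\pi)=-1$. Since $c_0$ is real, the squared distance function becomes
$$D(t)=|z(t)-c_0|^2=c_0^2-2c_0\bigl(2\cos t+\cos2t\bigr)+5+4\cos3t,$$
using $|z(t)|^2=5+4\cos3t$.

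Next I would locate the extrema of $D$. Differentiating and simplifying, $D'(t)=0$ factors so that the critical points satisfy $\sin t=0$, $2\cos t+1=0$, or $\cos t=(c_0+3)/6$; since $r_1,r_2\ge1$ forces $c_0\le-9$, we have $(c_0+3)/6\le-1$, so the last equation contributes no critical point beyond $t=\pi$, leaving only $t=0,\tfrac{2\pi}{3},\pi,\tfrac{4\pi}{3}$. Setting $S=4(r_1^2+r_2^2)$ so that $c_0=-(S+1)$, evaluation gives $D(0)=(S+4)^2$, $D(\pi)=S^2$ and $D(\tfrac{2\pi}{3})=D(\tfrac{4\pi}{3})=S^2-S+7$. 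As $S\ge8$, the minimum of $D$ is $S^2-S+7$, attained at the two complex cusps.

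Finally, $c_0=-(S+1)$ lies to the left of $-1$ and hence outside $\De$, while $R^2<(S+4)^2$ always holds (equivalently $2r_1r_2<r_1^2+r_2^2+1$); therefore the condition $R^2<\min_t D(t)=S^2-S+7$ guarantees that no point of $\mathcal{K}$ meets $\De$, so by connectedness $\mathcal{K}$ lies in the unbounded, non-elliptic component. A short computation turns $R^2<S^2-S+7$ into
$$64r_1^2r_2^2<16(r_1^2+r_2^2)^2-4(r_1^2+r_2^2)+7,$$
that is, into $7-4(r_1^2+r_2^2)+16(r_1^2-r_2^2)^2>0$, which is exactly the hypothesis. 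I expect the delicate point to be the third step: because $\De$ is not convex one must identify the \emph{global} minimiser of $D(t)$ among the four critical points and confirm that it is the pair of complex cusps rather than the real point $-1$, which is where the inequality $S\ge8$ is decisive.
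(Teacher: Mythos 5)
Your proposal is correct and, despite the distance-minimisation packaging, it is essentially the paper's own proof: the paper equates the trace circle with the deltoid parametrisation $2e^{i\theta}+e^{-2i\theta}$, takes squared moduli, and shows the resulting cubic $Q(X)$ in $X=\cos\theta$ has no roots in $[-1,1]$ by a critical-point analysis — exactly your computation, since $D(t)=R^2+4Q(\cos t)$, so your critical points $t=0,\pi,\pm 2\pi/3$ and values $(S+4)^2$, $S^2$, $S^2-S+7$ correspond precisely to the paper's $Q(1)$, $Q(-1)$, $Q(-1/2)$, and positivity of the last is exactly the stated hypothesis. The only cosmetic difference is that the paper first disposes of the case $r_1=r_2$ (where the hypothesis is vacuous) so that it can assert $Q(-1)=4(r_1^2-r_2^2)^2>0$, whereas your formulation absorbs that case automatically.
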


\begin{proof}
In the case $r_1=r_2=r\ge1$ we have that
$$7-4(r_1^2+r_2^2)+16(r_1^2-r_2^2)^2=7-8r^2$$
is never positive,
hence we only need to consider the case 
$$r_1\ne r_2.$$
When does  $\trace(\wb)=8r_1r_2e^{i\alpha}-(4r_1^2+4r_2^2+1)$ lie outside the deltoid~$\De$ for all $\alpha$?
Any point where $\trace(\wb)$ lies on the deltoid is a solution to
$$8r_1r_2e^{i\alpha}-(4r_1^2+4r_2^2+1)=2e^{i\theta}+e^{-2i\theta}$$
for some $\theta$.
In other words,
\begin{align*}
  (8r_1r_2)^2
  =&\bigl| 4r_1^2+4r_2^2+1+2e^{i\theta}+e^{-2i\theta}\bigr|^2 \\
  =&\bigl(4r_1^2+4r_2^2+1+2\cos(\theta)+\cos(2\theta)\bigr)^2+\bigl(2\sin(\theta)-\sin(2\theta)\bigr)^2 \\
  =&4\bigl(2r_1^2+2r_2^2+\cos(\theta)+\cos^2(\theta)\bigr)^2+4\bigl(1-\cos(\theta)\bigr)^2\bigl(1-\cos^2(\theta)\bigr).
\end{align*}
Dividing by 4 and simplifying means that $X=\cos(\theta)\in[-1,1]$ is a root of the cubic polynomial $Q(X)$ given by
$$Q(X)=4X^3+X^2-2X+1+4(r_1^2+r_2^2)X(1+X)+4(r_1^2-r_2^2)^2.$$
Note that $Q(-1)=4(r_1^2-r_2^2)^2>0$ and $Q(1)=4+8(r_1^2+r_2^2)+4(r_1^2-r_2^2)^2)>0$.
We have
\begin{align*}
  Q'(X)
  &=12X^2+2X-2+4(r_1^2+r_2^2)(1+2X)\\
  &=2\big(3X+2(r_1^2+r_2^2)-1\big)(1+2X),
\end{align*}
hence the critical points of~$Q$ are 
$$
  X_1=\frac{1-2(r_1^2+r_2^2)}{3}<-1
  \quad\text{and}\quad
  X_2=-\frac{1}{2}.
$$
The polynomial~$Q$ satisfies $Q(-1),Q(1)>0$ and $X_2$ is the only critical point of~$Q$ in~$[-1,1]$,
hence 
$$Q(X_2)=\frac{7}{4}-(r_1^2+r_2^2)+4(r_1^2-r_2^2)^2>0$$
implies that~$Q$ has no roots in~$[-1,1]$ and hence $\trace(\wb)$ is always outside the deltoid~$\De$.
\end{proof}

\begin{rem}
Analysing the polynomial~$Q$ in the proof above, we can understand the ellipticity of~$\wb$ in other cases as well.
If $7-4(r_1^2+r_2^2)+16(r_1^2-r_2^2)^2\le0$ and $r_1\ne r_2$ then there exist $\al_1,\al_2\in(0,\pi/2)$ with $\al_1\le\al_2$ such that $\wb$ is hyperbolic for~$|\al|<\al_1$ and for $|\al|>\al_2$ and elliptic for $\al_1<|\al|<\al_2$.
If $r_1=r_2$ then there exists $\al_0\in(0,\pi/2)$ such that $\wb$ is hyperbolic for $|\al|<\al_0$ and elliptic for $|\al|>\al_0$, moreover $\al_0$ can be computed explicitly, see next section.
\end{rem}


\section{Isosceles Case}

\label{sec-isosceles}

\myskip
In this section we will give a summary of the results in the special case of an isosceles triangle, i.e.\ $m_1=m_2=m$ and $r_1=r_2=r$.

\myskip
In this case the discreteness conditions in both Propositions~\ref{conditions2-typeB} and~\ref{conditions1} become
$$\sin^2\left(\frac{\al}{2}\right)\ge\frac{1}{4r^2}=\left(\frac{1}{2r}\right)^2,$$
hence we obtain the same result as in \cite{WG}.
(To compare the results, note that $\sin^2(\al/2)=(1+t^2)^{-1}$, where $t=\tan(\theta)$ is the parameter used in~\cite{WG}.)

\myskip
On the other hand Proposition~\ref{non-discrete} says that the group is non-discrete if
$$\sin^2\left(\frac{\al}{2}\right)<\frac{2r^2+1-2r\sqrt{r^2+1}}{64r^2}=\left(\frac{r-\sqrt{r^2+1}}{8r}\right)^2.$$

\myskip
Now let us discuss the ellipticity of the elements~$\wal$ and~$\wb$.
For $r_1=r_2=r\ge1$ we have
$$4r_1r_2\sin^2\left(\frac{\al}{2}\right)\ge0\ge\frac{1-r^2}{\ell(\ell+1)}=\fa(\ell)\quad\text{and hence}\quad\trace(\wal)\ge3$$
(with equality in the case $\al=0$ and $r=1$).
Therefore $\wal$ are always non-elliptic.
As was shown in section~12 of~\cite{Pra}
the element~$\wb$ is non-elliptic for
$$\sin^2\left(\frac{\al}{2}\right)>\frac{2r^2-2}{r^2\cdot\left(64r^4-80r^2+13+(8r^2-7)^{3/2}\cdot(8r^2+1)^{1/2}\right)}.$$
(To compare the results, note that $\sin^2(\al/2)=(1+t^2)^{-1}$, where $t=(\tan(\al/2))^{-1}$ is the parameter used in~\cite{Pra}.)
Hence under the condition on $\sin^2(\al/2)$ above the elements~$\wal$  and~$\wb$ are all non-elliptic.

\begin{rem}
In the case $m_1=m_2=0$ conditions~$(1)$ and~$(2)$ imply that the ideal triangle group is discrete for $\sin(\al/2)\ge0{.}5$.
But as conjectured by Goldman and Parker~\cite{GP} and proved by Schwartz~\cite{Sch},
the ideal triangle group is still discrete 
for smaller values of~$\al$, 
namely if and only if $\sin(\al/2)\ge\frac{\sqrt{6}}{16}\approx 0{.}153$.
\end{rem}

\bigskip\noindent
{\bf Acknowledgements:}
We would like to thank the referee for their helpful comments.

\def\cprime{$'$}
\providecommand{\bysame}{\leavevmode\hbox to3em{\hrulefill}\thinspace}
\providecommand{\MR}{\relax\ifhmode\unskip\space\fi MR }
\providecommand{\MRhref}[2]{%
  \href{http://www.ams.org/mathscinet-getitem?mr=#1}{#2}
}
\providecommand{\href}[2]{#2}


\begin{thebibliography}{KPTh}

\bibitem[CG]{CG}
S.~Chen and L.~Greenberg,
\emph{Hyperbolic spaces},
Contribution to Analysis, Academic Press, 1974, 49--87.

\bibitem[Gol]{Gol}
W.M.~Goldman,
\emph{{Complex Hyperbolic Geometry}},
Oxford University Press, 1999.

\bibitem[GP]{GP}
W.M.~Goldman and J.R.~Parker,
\emph{Complex Hyperbolic Ideal Triangle Groups},
J. reine angew. Math.\ 425 (1992), 71--86.

\bibitem[JKP]{JKP}
Y.~Jiang, S.~Kamiya and J.R.~Parker,
\emph{{J{\o}rgensen's inequality for complex hyperbolic space}},
Geom.\ Dedicata\ 97 (2003), 55--80.

\bibitem[Mo]{Mo}
A.~Monaghan,
\emph{{Complex Hyperbolic Triangle Groups}},
Ph.D. thesis, University of Liverpool, 2013.

\bibitem[Par92]{Par92}
J.R.~Parker,
\emph{{Shimizu's lemma for complex hyperbolic space}},
Intern.\ J.\ Math.\ 3 (1992), 291--308.

\bibitem[Par97]{Par97}
J.R.~Parker,
\emph{{Uniform discreteness and Heisenberg translations}},
Math.\ Z.\ 225 (1997), 485--505.


\bibitem[Par09]{Par09}
J.R.~Parker,
\emph{{Complex Hyperbolic Lattices}},
Discrete groups and geometric structures,
Contemp.\ Math.\ 501 (2009), 1--42.

\bibitem[Par10]{Par10}
J.R.~Parker,
\emph{{Notes on Complex Hyperbolic Geometry}},
lecture notes, 2010.

\bibitem[Pra]{Pra}
A.~Pratoussevitch,
\emph{{Traces in Complex Hyperbolic Triangle Groups}},
Geometriae Dedicata 111 (2005), 159--185.

\bibitem[Sch01]{Sch}
R.E.~Schwartz,
\emph{Ideal triangle groups, dented tori and numerical analysis},
Annals of Math.\ 153 (2001), 533--598.

\bibitem[Sch02]{Sch02}
R.E.~Schwartz,
\emph{Complex hyperbolic triangle groups},
Proceedings of the ICM (Beijing, 2002), Vol~II,
Higher Ed.\ Press, 339--349, 2002.

\bibitem[Sun]{Sun}
L.-J.~Sun,
\emph{Notes on Compex Hyperbolic Triangle Groups of Type $(m,n,\infty)$},
Adv. Geom.\ 17 (2017), 191--202.

\bibitem[Vas]{Vas}
S.G.~de Assis Vasconcelos,
\emph{{Discretude de Grupos Triangulares Ultraparallelos em Geometria Hiperbolica Complexa}} (available in Portuguise only),
Ph.D. thesis, Universidade Federal de Minas Gerais, Brazil, 2007, 
{\tt www.mat.ufmg.br/intranet-atual/pgmat/TesesDissertacoes/uploaded/Tese010.pdf}.

\bibitem[WG]{WG}
J.~Wyss-Gallifent,
\emph{{Complex Hyperbolic Triangle Groups}},
Ph.D. thesis, University of Maryland, 2000,
{\tt www.math.umd.edu/~jow/thesis/thesis.pdf}.

\bibitem[Will]{Will}
P.~Will,
\emph{Two-generator groups acting on the complex hyperbolic plane}, 275--334, 
in {\sl Handbook of Teichm\"uller Theory}, Volume~VI, Chapter~9,
ed.\ A.~Papadopoulos,
IRMA Lectures in Mathematics and Theoretical Physics, EMS, Z\"urich, 2016.

\end{thebibliography}
\end{document}